\documentclass[10.5pt,a4paper]{article}

\usepackage{graphicx,latexsym,euscript,makeidx,color,bm}
\usepackage{amsmath,amsfonts,amssymb,amsthm,thmtools,mathrsfs,enumerate}
\usepackage[colorlinks,linkcolor=blue,anchorcolor=green,citecolor=red]{hyperref}



\usepackage{geometry}
\geometry{left=3cm,right=3cm,top=3.0cm,bottom=3.0cm}


   \def\cD{{\cal D}}  
\def\dbE{\mathbb{E}}     
\def\dbF{\mathbb{F}}   \def\cF{{\cal F}}  
     
\def\dbH{\mathbb{H}}

\def\dbP{\mathbb{P}}     
   \def\cQ{{\cal Q}}  
\def\dbR{\mathbb{R}} \def\sR{\mathscr{R}}    
\def\dbS{\mathbb{S}}     
     
   \def\cU{{\cal U}}


\def\ss{\smallskip}      \def\lt{\left}       \def\hb{\hbox}
\def\ms{\medskip}        \def\rt{\right}      \def\ae{\hbox{\rm a.e.}}
        \def\lan{\langle}    
\def\ds{\displaystyle}   \def\ran{\rangle}    \def\tr{\hbox{\rm tr$\,$}}
      \def\llan{\lt\lan}   
\def\no{\noindent}       \def\rran{\rt\ran}   
\def\ns{\noalign{\ss}}

\def\rf{\eqref}            \def\hp{\hphantom}
\def\deq{\triangleq}     \def\({\Big (}       \def\nn{\nonumber}
\def\les{\leqslant}      \def\){\Big )}       
\def\ges{\geqslant}      \def\[{\Big[}        
\def\ti{\tilde}          \def\]{\Big]}        
      \def\q{\quad}        
         \def\qq{\qquad}      \def\1n{\negthinspace}
\def\cd{\cdot}           \def\2n{\1n\1n}      \def\3n{\1n\1n\1n}


              \def\Om{\Omega}  
         \def\D{\Delta}   \def\d{\delta}   \def\F{\Phi}     
\def\z{\zeta}         \def\Th{\Theta}      \def\si{\sigma}
\def\e{\varepsilon}     \def\l{\lambda}        
         \def\f{\varphi}  \def\i{\infty}   


\def\ba{\begin{array}}                \def\ea{\end{array}}
\def\bel{\begin{equation}\label}      \def\ee{\end{equation}}

\newtheorem{theorem}{Theorem}[section]
\newtheorem{definition}[theorem]{Definition}
\newtheorem{proposition}[theorem]{Proposition}
\newtheorem{corollary}[theorem]{Corollary}
\newtheorem{lemma}[theorem]{Lemma}
\newtheorem{remark}[theorem]{Remark}
\newtheorem{example}[theorem]{Example}

\newenvironment{taggedassumption}[1]
 {\taggedassumptionx}
 {\endtaggedassumptionx}
\makeatletter
   
   \@addtoreset{equation}{section}
\makeatother
\sloppy  \allowdisplaybreaks[4]

\begin{document}

\title{\bf Mean-Field Stochastic Linear-Quadratic Optimal
Control Problems: Weak Closed-Loop Solvability}
\author{Jingrui Sun\thanks{Department of Mathematics, Southern University of Science and Technology,
                           Shenzhen, Guangdong 518055, China ({\tt sunjr@sustech.edu.cn} ).} \and
       Hanxiao Wang\thanks{Corresponding author. School of Mathematical Sciences, Fudan University,
                           Shanghai 200433, China (Email: {\tt hxwang14@} {\tt fudan.edu.cn}).}}
\maketitle

{\no\bf Abstract.}
This paper is concerned with mean-field stochastic linear-quadratic (MF-SLQ, for short)
optimal control problems with deterministic coefficients.
The notion of weak closed-loop optimal strategy is introduced.
It is shown that the  open-loop solvability is equivalent to the
existence of a weak closed-loop optimal strategy.
Moreover, when open-loop optimal controls exist, there is at least one of them admitting
a state feedback representation, which is the outcome of a weak closed-loop optimal strategy.
Finally, an example is presented to illustrate the procedure for finding weak closed-loop
optimal strategies.

\ms

{\no\bf Keywords.}
mean-field stochastic differential equation, linear-quadratic optimal control, open-loop solvability,
weak closed-loop solvability, state feedback.

\ms

{\no\bf AMS subject classifications.} 93E20, 49N10, 49N35.

\section{Introduction}\label{Sec:Introduction}

Throughout this paper, $(\Om,\cF,\dbP)$ is a given complete probability space
on which a standard one-dimensional Brownian motion $W=\{W(t);0\les t<\i\}$ is defined.
The augmented natural filtration of $W$ is denoted by $\dbF=\{\cF_t\}_{t\ges0}$. Let
\begin{align*}
L_{\cF_{t}}^2(\Om;\dbR^n) &= \big\{\xi:\Om\to\dbR^n~| ~\xi \hbox{ is $\cF_{t}$-measurable with } \dbE|\xi|^2<\i\big\},  \\
                      \cD &= \big\{(t,\xi)~|~t\in[0,T),\,\xi\in L^2_{\cF_t}(\Om;\dbR^n)\big\}.
\end{align*}
For any {\it initial pair} $(t,\xi)\in\cD$, consider the following controlled linear mean-filed
stochastic differential equation (MF-SDE, for short) on the finite horizon $[t,T]$:
\bel{state}\left\{\begin{aligned}
   dX(s) &=\big\{A(s)X(s)+ \bar A(s)\dbE[X(s)] +B(s)u(s)+\bar B(s)\dbE[u(s)]+ b(s)\big\}ds \\
         &\hp{=\ } +\big\{C(s)X(s)+\bar C(s)\dbE[X(s)] + D(s)u(s)+\bar D(s)\dbE[u(s)]+\si(s)\big\}dW(s),\\
     X(t)&= \xi,
\end{aligned}\right.\ee
where $A,\bar A,\,C,\bar C:[0,T]\to\dbR^{n\times n}$, $B,\bar B,\, D,\bar D:[0,T]\to\dbR^{n\times m}$
are given deterministic functions, called the {\it coefficients} of the {\it state equation} \rf{state};
$b,\si:[0,T]\times\Om\to\dbR^n$ are $\dbF$-progressively measurable processes, called the {\it nonhomogeneous terms}.
In the above, the solution $X$ of \rf{state} is called a {\it state process},
and $u:[t,T]\times\Om\to \dbR^m$ is called a {\it control process}, which is an element of the following space:
$$\cU[t,T] = \bigg\{u:[t,T]\times\Om\to \dbR^m~\big|~u\hb{~is $\dbF$-progressively measurable and~}
\dbE\int^T_t|u(s)|^2ds<\i\bigg\}. $$
According to the classical results of mean-field SDEs (see \cite{Yong 2017}, for example),
under some mild conditions, for any $(t,\xi)\in\cD$ and $u(\cd)\in\cU[t,T]$,
equation \rf{state} admits a unique  solution $X(\cd)\equiv X(\cd\,;t,\xi,u(\cd))$.
To measure the performance of the control $u(\cd)$, we introduce the following quadratic {\it cost functional}:
\begin{align}\label{cost}
J(t,\xi;u(\cd))
&= \dbE\big\{\lan GX(T),X(T)\ran + 2\lan g,X(T)\ran +\lan\bar G\dbE[X(T)],\dbE[X(T)]\ran + 2\lan \bar g,\dbE[X(T)]\ran\nn\\
&\hp{=\ } +\int_t^T\bigg[\llan\begin{pmatrix}Q (s)& S(s)^\top \\ S(s) & R(s)\end{pmatrix}
                                 \begin{pmatrix}X(s) \\ u(s)\end{pmatrix},
                                 \begin{pmatrix}X(s) \\ u(s)\end{pmatrix}\rran +2\llan\begin{pmatrix}q(s) \\ \rho(s)\end{pmatrix},
                   \begin{pmatrix}X(s) \\ u(s)\end{pmatrix}\rran\bigg]ds\nn\\
&\hp{=\ } +\int_t^T\bigg[\llan\begin{pmatrix}\bar Q (s)& \bar S(s)^\top \\ \bar S(s) & \bar R(s)\end{pmatrix}
                                 \begin{pmatrix}\dbE[X(s)] \\ \dbE[u(s)]\end{pmatrix},
                                 \begin{pmatrix}\dbE[X(s)] \\ \dbE[u(s)]\end{pmatrix}\rran\nn\\
&\hp{=\ }\qq\qq +2\llan\begin{pmatrix}\bar q(s) \\ \bar \rho(s)\end{pmatrix},
                   \begin{pmatrix}\dbE[X(s)] \\ \dbE[u(s)]\end{pmatrix}\rran\bigg]ds\Bigg\},
\end{align}
where $G,\bar G\in\dbR^{n\times n}$ are symmetric constant matrices;
$g$ is an $\cF_T$-measurable $\dbR^n$-valued random vector and $\bar g$ is a (deterministic) $\dbR^n$-valued vector;
$Q,\bar Q:[0,T]\to\dbR^{n\times n}$, $S,\bar S:[0,T]\to\dbR^{m\times n}$, and $R,\bar R:[0,T]\to\dbR^{m\times m}$
are deterministic functions with $Q,\bar Q$ and $R,\bar R$ being symmetric;
$q:[0,T]\times\Om\to\dbR^n$, $\rho:[0,T]\times\Om\to\dbR^m$ are $\dbF$-progressively measurable processes;
and $\bar q:[0,T]\to\dbR^n$, $\bar\rho:[0,T]\to\dbR^m$ are deterministic functions.
In the above, $M^\top$ stands for the transpose of a matrix $M$.
With the state equation \rf{state} and the  cost functional \rf{cost},
the {\it mean-field stochastic linear-quadratic (LQ, for short) optimal control problem} can be stated as follows:

\ms

{\bf Problem (MF-SLQ).} For any given initial pair $(t,\xi)\in\cD$,
find a control $\bar u(\cd)\in \cU[t,T]$ such that
\bel{inf J}J(t,\xi;\bar u(\cd))=\inf_{u(\cd)\in\cU[t,T]} J(t,\xi;u(\cd))\deq V(t,\xi).\ee

\ss

Any $\bar u(\cd)\in\cU[t,T]$ satisfying \rf{inf J} is called an ({\it open-loop}) {\it optimal control}
of Problem (MF-SLQ) for the initial pair $(t,\xi)$;
the corresponding state process $\bar X(\cd)\equiv X(\cd\,;t,\xi,\bar u(\cd))$ is called an ({\it open-loop})
{\it optimal state process};
$(\bar X(\cd),\bar u(\cd))$ is called an ({\it open-loop}) {\it optimal pair};
and  $V(\cd\,,\cd):\cD\to\dbR$ is called the {\it value function} of Problem (MF-SLQ).
Note that in the special case when $b(\cd),\si(\cd),g,\bar g,q(\cd),\bar q(\cd),\rho(\cd)$ and $\bar \rho(\cd)$ are absent,
the state equation \rf{state} and the cost functional \rf{cost} reduce to
\bel{state0}\left\{\begin{aligned}
   dX(s) &=\big\{A(s)X(s)+ \bar A(s)\dbE[X(s)] +B(s)u(s)+\bar B(s)\dbE[u(s)]\big\}ds\\
         &\hp{=\ } +\big\{C(s)X(s)+\bar C(s)\dbE[X(s)]  + D(s)u(s)+\bar D(s)\dbE[u(s)]\big\}dW(s),\\
     X(t)&= \xi,
\end{aligned}\right.\ee
and
\begin{align}\label{cost0}
J^0(t,\xi;u(\cd))
&= \dbE\big\{\lan GX(T),X(T)\ran +\lan\bar G\dbE[X(T)],\dbE[X(T)]\ran\nn\\
&\hp{=\ } +\int_t^T\llan\begin{pmatrix}Q (s)& S(s)^\top \\ S(s) & R(s)\end{pmatrix}
                                 \begin{pmatrix}X(s) \\ u(s)\end{pmatrix},
                                 \begin{pmatrix}X(s) \\ u(s)\end{pmatrix}\rran ds\nn\\
&\hp{=\ } +\int_t^T\llan\begin{pmatrix}\bar Q (s)& \bar S(s)^\top \\ \bar S(s) & \bar R(s)\end{pmatrix}
                                 \begin{pmatrix}\dbE[X(s)] \\ \dbE[u(s)]\end{pmatrix},
                                 \begin{pmatrix}\dbE[X(s)] \\ \dbE[u(s)]\end{pmatrix}\rran ds\Bigg\},
\end{align}
respectively. In this case, we denote the corresponding mean-field stochastic LQ problem
and its value function by Problem (MF-SLQ)$^0$ and $V^0(\cd\,,\cd)$, respectively.

\ms

When the mean-field part vanishes, Problem (MF-SLQ) becomes a classical stochastic
LQ optimal control problem, which has been well studied by many researchers;
see, for example, \cite{Wonham 1968,Bismut 1976,Chen-Li-Zhou 1998,Yong-Zhou 1999,
Chen-Zhou 2000,Rami-Moore-Zhou 2001,Chen-Yong 2001,Sun-Li-Yong 2016,Wang 2019}
and the references cited therein.
LQ optimal control problems for MF-SDEs over a finite horizon were first studied by Yong \cite{Yong 2013},
and were later extended to the infinite horizon by Huang, Li, and Yong \cite{Huang 2014}.
Recently, based on the idea of \cite{Sun-Yong 2014,Sun-Li-Yong 2016}, Sun \cite{Sun 2017}
and Li, Sun, and Yong \cite{Li-Sun-Yong 2016} investigated the open-loop and closed-loop
solvabilities for Problem (MF-SLQ) and found that these two types of solvabilities
are essentially different.
More precisely, they showed in \cite{Li-Sun-Yong 2016} that the closed-loop solvability of Problem (MF-SLQ)
is equivalent to the existence of a {\it regular} solution to the following generalized Riccati equation
(GRE, for short):
\bel{GRE1}\left\{\begin{aligned}
& \dot P+PA+A^\top P+C^\top PC+Q \\
& \hp{\dot P} -(PB+C^\top PD+S^\top)(R+D^\top PD)^\dag(B^\top P+D^\top PC+ S)=0, \\
& \dot{\Pi}+\Pi(A+\bar A)+(A+\bar A)^\top\Pi+Q+\bar Q+(C+\bar C)^\top P(C+\bar C)\\
& \hp{\dot{\Pi}} -\big[\Pi(B+\bar B)+(C+\bar C)^\top P(D+\bar D)+(S+\bar S)^\top\big]\big[R+\bar R+(D+\bar D)^\top P(D+\bar D)\big]^\dag\\
& \hp{\dot \Pi} \times\big[(B+\bar B)^\top\Pi+(D+\bar D)^\top P(C+\bar C)+(S+\bar S)\big]=0, \\
&P(T)=G,\q \Pi(T)=G+\bar G,
\end{aligned}\right.\ee
(where $M^\dag$ denotes the Moore--Penrose pseudoinverse of a matrix $M$ and the argument $s$ is suppressed),
and that the closed-loop solvability implies the open-loop solvability of Problem (MF-SLQ), but not vice-versa.
The advantage of existence of a closed-loop optimal strategy is that a state feedback optimal control, which is
the outcome of some closed-loop optimal strategy, can be explicitly constructed in terms of the solution to \rf{GRE1}.
However, as just mentioned, Problem (MF-SLQ) might be merely open-loop solvable,
in which case solving the GRE \rf{GRE1} will fail to produce a state feedback optimal control.
To see this, let us consider the following example.

\begin{example}\label{ex-1.1}\rm
Consider the one-dimensional state equation
$$\left\{\begin{aligned}
   dX(s) &= \big\{X(s)+\dbE[X(s)]+u(s)+\dbE[u(s)]\big\}ds +\big \{X(s)+\dbE[X(s)]\big\}dW(s), \q s\in[t,1],\\
    X(t) &= \xi,
\end{aligned}\right.$$
and the nonnegative cost functional
$$ J(t,x;u(\cd))=|\dbE [X(1)]|^2. $$
In this example, the associated GRE reads
\bel{example-Ric}\left\{\begin{aligned}
   & \dot{P}(s)+3P(s)=0,\q \dot\Pi(s)+4\Pi(s)+4P(s)=0, \q s\in[t,1],\\
    &P(1)=0,\qq \Pi(1)=1.
\end{aligned}\right. \ee
It is easily to verify that the unique solution of \rf{example-Ric} is $(P(s),\Pi(s))\equiv(0,e^{4-4s})$,
which, however, is not regular according to the definition in \cite{Li-Sun-Yong 2016}.
If we use the usual Riccati equation approach to construct the state feedback optimal control $u^*(\cd)$,
then $u^*(\cd)$ should be given by the following (noting that $R(\cd)=0$, $\bar R(\cd)=0$, $D(\cd)=0$, $\bar D(\cd)=0$ and $0^\dag=0$):
\begin{align*}
 u^*(s)& \deq -\big[R(s)+D(s)^\top P(s)D(s)\big]^\dag\big[B(s)^\top P(s)+D(s)^\top P(s)C(s)+ S(s)\big]\{X(s)-\dbE[X(s)]\}\\
 &\q-\big[R(s)+\bar R(s)+(D(s)+\bar D(s))^\top P(s)(D(s)+\bar D(s))\big]^\dag\\
 &\qq\times\big[(B(s)+\bar B(s))^\top \Pi(s)+(D(s)+\bar D(s))^\top P(s)(C(s)+\bar C(s))+ S(s)+\bar S(s)\big]\dbE[X(s)]\\
 &\equiv 0.
\end{align*}
Such a control is {\it not} open-loop optimal if the initial state $\xi$ satisfies $\dbE[\xi]\neq0$.
Indeed, by the variation of constants formula, the expectation of the state process $X^*(\cd)$
corresponding to $(t,\xi)$ and $u^*(\cd)$ is given by
$$\dbE[X^*(s)]=e^{2s-2t}\dbE[\xi],\q s\in[t,1].$$
Thus,
$$J(t,x;u^*(\cd))=|\dbE[X^*(1)]|^2=e^{4-4t}|\dbE[\xi]|^2>0.$$
On the other hand, let $\bar u(\cd)$ be the control defined by
$$\bar u(s)\equiv {\dbE[\xi]\over 2(t-1)}e^{2s-2t}, \q s\in[t,1]. $$
By the variation of constants formula, the expectation of the state process
$\bar X(\cd)$ corresponding to $(t,\xi)$ and $\bar u(\cd)$ is given by
\begin{align*}
\dbE[\bar X(s)]&=e^{2s-2t}\dbE[\xi]+2\int_t^s e^{2s-2r}\dbE[u(r)]dr\\
&=e^{2s-2t}\dbE[\xi]\big(1+{s-t\over t-1}\big),\q s\in[t,1],
\end{align*}
which satisfies $\dbE[\bar X(1)]=0$. Hence,
$$ J(t,x;\bar u(\cd)) =|\dbE[\bar X(1)]|^2 =0 <J(t,x;u^*(\cd)). $$
Since the cost functional is nonnegative, $\bar u(\cd)$ is open-loop optimal for the initial pair $(t,\xi)$, but $u^*(\cd)$ is not.
\end{example}

Now some questions arise naturally:
{\it When Problem (MF-SLQ) is merely open-loop solvable, does state feedback optimal control exists?
If yes, how can we find such an optimal control?}
The objective of this paper is to answer these questions.
We shall first provide an alternative characterization of the open-loop solvability of Problem (MF-SLQ)
using the perturbation approach introduced by Sun, Li, and Yong \cite{Sun-Li-Yong 2016}.
This characterization, which avoids the subsequence extraction, is a refinement
of \cite[Theorem 3.2]{Sun 2017}.
Then we generalize the notion of weak closed-loop strategies, which is first introduced
by Wang, Sun, and Yong \cite{Wang 2019} for classical stochastic LQ problems, to the mean-field case.
We shall show that the existence of a weak closed-loop optimal strategy is equivalent to
the existence of an open-loop optimal control,
and that as long as Problem (MF-SLQ) is open-loop solvable, a state feedback optimal control
always exists and can be represented as the outcome of a weak closed-loop optimal strategy.
Moreover, our constructive proof provides a procedure for finding weak closed-loop optimal strategies.

\ms

The rest of the paper is organized as follows.
In \autoref{Sec:Preliminaries}, we collect some preliminary results and introduce
a few elementary notions for Problem (MF-SLQ).
\autoref{Sec:3} is devoted to the study of open-loop solvability by a perturbation method.
In \autoref{Sec:4}, we show how to obtain a weak closed-loop optimal strategy
and establish the equivalence between open-loop and weak closed-loop solvabilities.
An example is presented in \autoref{Sec:Example} to illustrate the results we obtained.

\section{Preliminaries}\label{Sec:Preliminaries}
Let $\dbR^{n\times m}$ be the Euclidean space consisting of $n\times m$ real matrices, endowed with the Frobenius inner product $\lan M,N\ran\deq\tr[M^\top N]$, where $M^\top$ and $\tr(M)$ stand for the transpose of a matrix $M$ and  the trace of $M$, respectively.
The  Frobenius norm of a matrix $M$ is denoted by $|M|$.
We shall denote by $I_n$ the identity matrix of size $n$ and by  $\dbS^n$ the subspace of $\dbR^{n\times n}$ consisting of symmetric matrices.
Let $T>0$ be a fixed time horizon.
For any $t\in[0,T)$ and  Euclidean space $\dbH$ (which could be $\dbR^n$, $\dbR^{n\times m}$, $\dbS^n$, etc.), we introduce the following spaces of functions and processes:
\begin{align*}
C([t,T];\dbH):
   &\hb{~~the space of $\dbH$-valued, continuous functions on $[t,T]$}; \\
L^p(t,T;\dbH):
   &\hb{~~the space of $\dbH$-valued functions that are $p$th $(1\les p\les\i)$} \\
   &\hb{~~power Lebesgue integrable on $[t,T]$};\\
L^2_{\cF_T}(\Om;\dbH):
   &\hb{~~the space of $\cF_T$-measurable, $\dbH$-valued random variables $\xi$} \\
   &\hb{~~with $\dbE|\xi|^2<\i$};\\
L^2_\dbF(\Om;L^1(t,T;\dbH)):
   &\hb{~~the space of $\dbF$-progressively measurable, $\dbH$-valued processes} \\
   &\hb{~~$\f:[t,T]\times\Om\to\dbH$ with $\dbE\big[\ds\int_t^T|\f(s)|ds\big]^2<\i$};\\
L_\dbF^2(t,T;\dbH):
   &\hb{~~the space of $\dbF$-progressively measurable, $\dbH$-valued processes} \\
   &\hb{~~$\f:[t,T]\times\Om\to\dbH$ with $\ds\dbE\int_t^T|\f(s)|^2ds<\i$};\\
L_\dbF^2(\Om;C([t,T];\dbH)):
   &\hb{~~the space of $\dbF$-adapted, continuous, $\dbH$-valued processes} \\
   &\hb{~~$\f:[t,T]\times\Om\to\dbH$ with $\dbE\big[\sup_{s\in[t,T]}|\f(s)|^2\big]<\i$}.
\end{align*}
For $M,N\in\dbS^n$, we use the notation $M\ges N$ (respectively, $M>N$) to indicate that $M-N$ is positive
semi-definite (respectively, positive definite).
Further, for any $\dbS^n$-valued measurable function $F$ on $[t,T]$, we denote
$$\left\{\begin{aligned}
  &F\ges 0\q \Longleftrightarrow\q F(s)\ges 0,\q \ae~s\in[t,T],\\
 &F> 0\q \Longleftrightarrow\q F(s)> 0,\q \ae~s\in[t,T],\\
 &F\gg 0\q \Longleftrightarrow\q F(s)\ges \d I_n,\q \ae~s\in[t,T],~\hbox{for some } \d>0.
\end{aligned}\right. $$

For the state equation \rf{state} and cost functional \rf{cost}, we introduce the following assumptions:
\begin{taggedassumption}{(H1)}\label{ass:A1}
The coefficients and the nonhomogeneous terms of the state equation \rf{state} satisfy
$$\left\{\begin{aligned}
   A(\cd),\bar A(\cd) &\in L^1(0,T;\dbR^{n\times n}),     &&&   B(\cd),\bar B(\cd) &\in L^2(0,T;\dbR^{n\times m}),&&& b(\cd) &\in L^2_\dbF(\Om;L^1(0,T;\dbR^n)),\\
   C(\cd),\bar C(\cd) &\in L^2(0,T;\dbR^{n\times n}),     &&&   D(\cd),\bar D(\cd) &\in L^\i(0,T;\dbR^{n\times m}),&&& \si(\cd) &\in L_\dbF^2(0,T;\dbR^n).
\end{aligned}\right. $$
\end{taggedassumption}
\begin{taggedassumption}{(H2)}\label{ass:A2}
The weighting coefficients in the cost functional \rf{cost} satisfy
$$\left\{\2n\ba{ll}
\ns\ds Q(\cd),\bar Q(\cd)\in L^1(0,T;\dbS^n),\q S(\cd),\bar S(\cd)\in L^2(0,T;\dbR^{m\times n}),
\q R(\cd),\bar R(\cd)\in L^\infty(0,T;\dbS^m), \\
\ns\ds q(\cd)\in L^2_\dbF(\Om;L^1(0,T;\dbR^n)),\q\rho(\cd)\in L_\dbF^2(0,T;\dbR^m),\q g\in L^2_{\cF_T}(\Om;\dbR^n),\\
\ns\ds \bar q(\cd)\in L^1(0,T;\dbR^n),\q\bar\rho(\cd)\in L^2(0,T;\dbR^m),\q\bar g\in\dbR^n,\q G,\bar G\in\dbS^n.\ea\right.$$

\end{taggedassumption}
Under the assumption \ref{ass:A1}, we have the following well-posedness of the state equation, whose proof is standard and can be found in \cite[Proposition 2.1]{Yong 2017}.

\begin{lemma}\label{lmm:well-posedness-SDE}
Let {\rm\ref{ass:A1}} hold. Then for any initial pair $(t,\xi)\in\cD$ and control $u(\cd)\in\cU[t,T]$,
the state equation \rf{state} admits a unique  solution $X(\cd)\equiv X(\cd\,;t,x,u(\cd))$.
Moreover, there exists a constant $K>0$, independent of $(t,\xi)$ and $u(\cd)$, such that
$$ \dbE\lt[\sup_{t\les s\les T}|X(s)|^2\rt]
\les K\dbE\lt[|\xi|^2+\lt(\int_t^T|b(s)|ds\rt)^2 + \int_t^T|\si(s)|^2ds + \int^T_t|u(s)|^2ds\rt].$$
\end{lemma}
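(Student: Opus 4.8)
The plan is to establish the well-posedness of the mean-field SDE \rf{state} by reducing it to a pair of standard (non-mean-field) SDEs and then assembling the a priori estimate. First I would take expectations throughout \rf{state}. Writing $\bar X(s)=\dbE[X(s)]$ and $\bar u(s)=\dbE[u(s)]$ and using the fact that the It\^o integral has zero expectation, one obtains a deterministic (ODE-type) equation for $\bar X(\cd)$:
$$\dot{\bar X}(s)=\big(A(s)+\bar A(s)\big)\bar X(s)+\big(B(s)+\bar B(s)\big)\bar u(s)+\dbE[b(s)],\qq \bar X(t)=\dbE[\xi].$$
Under \ref{ass:A1} the coefficients $A+\bar A\in L^1$ and $B+\bar B\in L^2$, and since $u(\cd)\in\cU[t,T]$ guarantees $\bar u(\cd)\in L^2(t,T;\dbR^m)$ while $\dbE[b(\cd)]\in L^1(t,T;\dbR^n)$, classical linear ODE theory yields a unique absolutely continuous solution $\bar X(\cd)$ together with the bound $\sup_{s}|\bar X(s)|^2\les K\dbE[|\xi|^2+(\int_t^T|b|ds)^2+\int_t^T|u|^2ds]$.

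Next I would substitute this now-known function $\bar X(\cd)$ back into \rf{state}. Because the mean-field terms $\bar A\,\dbE[X]$, $\bar B\,\dbE[u]$, $\bar C\,\dbE[X]$, $\bar D\,\dbE[u]$ have become explicit (deterministic) inhomogeneities once $\bar X$ and $\bar u$ are fixed, equation \rf{state} collapses to a genuinely standard linear SDE in $X(\cd)$ with drift coefficient $A$, diffusion coefficient $C$, and augmented nonhomogeneous terms $\ti b(s)=\bar A(s)\bar X(s)+B(s)u(s)+\bar B(s)\bar u(s)+b(s)$ and $\ti\si(s)=\bar C(s)\bar X(s)+D(s)u(s)+\bar D(s)\bar u(s)+\si(s)$. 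I would check that $\ti b\in L^2_\dbF(\Om;L^1(t,T;\dbR^n))$ and $\ti\si\in L^2_\dbF(t,T;\dbR^n)$, which follows from \ref{ass:A1} (in particular $C\in L^2$, $D,\bar D\in L^\i$) combined with the integrability of $u,\bar u,\bar X$ just obtained. The standard theory for linear SDEs with $L^2$ diffusion and $L^1$ drift coefficients (precisely the result cited as \cite[Proposition 2.1]{Yong 2017}) then delivers a unique solution $X(\cd)\in L^2_\dbF(\Om;C([t,T];\dbR^n))$.

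Finally I would derive the stated a priori estimate. Applying the standard SDE estimate to the reduced equation gives
$$\dbE\Big[\sup_{t\les s\les T}|X(s)|^2\Big]\les K\,\dbE\Big[|\xi|^2+\big(\ts\int_t^T|\ti b(s)|ds\big)^2+\ts\int_t^T|\ti\si(s)|^2ds\Big],$$
and I would then bound $\int|\ti b|ds$ and $\int|\ti\si|^2ds$ in terms of the original data using the Cauchy--Schwarz inequality, the boundedness of $D,\bar D$, the $L^2$ bound on $C,\bar C$, and the estimate on $\sup_s|\bar X(s)|$ from the first step; absorbing the constants yields exactly the claimed inequality with a $K$ depending only on the coefficient norms and $T$, hence independent of $(t,\xi)$ and $u(\cd)$. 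The only mildly delicate point, and the place I would be most careful, is the bookkeeping in the last step: one must verify that the $L^1$-in-time drift coefficient $A$ (rather than $L^\i$) still produces the claimed estimate, which is why the Gr\"onwall argument must be run in integral (rather than pointwise) form. Since this is precisely the content of the cited proposition, I would simply invoke it rather than reprove the SDE estimate from scratch.
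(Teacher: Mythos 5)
Your proof is correct, but it takes a genuinely different route from the paper, because the paper gives no argument at all for this lemma: it simply declares the proof standard and delegates it to \cite[Proposition 2.1]{Yong 2017}, which is a well-posedness result for mean-field SDEs in their own right. Your argument instead exploits linearity and the fact that the coefficients are deterministic to decouple the equation: solve the closed, self-contained ODE for $\bar X(\cd)=\dbE[X(\cd)]$ first, then treat \rf{state} as a classical linear SDE whose mean-field terms have become known inhomogeneities. What this buys is an elementary, self-contained proof (no fixed-point argument over spaces of processes, which is how general mean-field well-posedness is usually obtained), at the price of being tied to the linear, deterministic-coefficient setting that the cited proposition transcends. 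Two points deserve tightening. First, after solving the reduced SDE you must verify consistency: its solution $X(\cd)$ a priori has a mean that need not equal the function $\bar X(\cd)$ you fed into it, and only when $\dbE[X(s)]=\bar X(s)$ does $X(\cd)$ solve the original equation \rf{state}. This is a one-line fix --- $e(s)=\dbE[X(s)]-\bar X(s)$ satisfies the homogeneous linear ODE $\dot e=Ae$ with $e(t)=0$, hence vanishes --- and the same two uniqueness statements, applied in sequence (first to the ODE for the means, then to the reduced SDE), give uniqueness for \rf{state}; your write-up leaves this implicit. Second, your attribution is off: \cite[Proposition 2.1]{Yong 2017} is precisely the mean-field result the paper itself cites, not the classical (non-mean-field) linear SDE existence/estimate you need for the reduced equation; the latter is standard and found, e.g., in \cite{Yong-Zhou 1999}. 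Neither point affects the substance: the integrability bookkeeping ($\ti b\in L^2_\dbF(\Om;L^1(t,T;\dbR^n))$, $\ti\si\in L_\dbF^2(t,T;\dbR^n)$), the Jensen bound $\int_t^T|\dbE[u(s)]|^2ds\les\dbE\int_t^T|u(s)|^2ds$, and the integral-form Gr\"onwall argument needed for the merely $L^1$-in-time drift coefficient are all handled correctly, and assembling the estimates yields the stated bound with a constant depending only on the coefficient norms and $T$, hence independent of $(t,\xi)$ and $u(\cd)$.
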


Suppose that \ref{ass:A1} holds. Then according to \autoref{lmm:well-posedness-SDE}, for any initial pair $(t,\xi)\in\cD$ and control $u(\cd)\in\cU[t,T]$, equation \rf{state} admits a unique  solution $X(\cd)\equiv X(\cd\,;t,x,u(\cd))\in L_\dbF^2(\Om;C([t,T];\dbR^n))$.
Hence, under the additional assumption \ref{ass:A2}, the random variables on the right-hand side of  \rf{cost} are integrable and Problem (MF-SLQ) is well-posed. We now recall the following notions of mean-filed stochastic LQ problems.
\begin{definition}\rm
Problem (MF-SLQ) is said to be
\begin{enumerate}[(i)]
\item ({\it uniquely}) {\it open-loop solvable at $(t,\xi)\in\cD$} if
      there exists a (unique) $\bar u(\cd)\equiv \bar u(\cd\,;t,\xi)\in \cU[t,T]$ (depending on $(t,\xi)$) such that
      \bel{open-optimal} J(t,\xi;\bar u(\cd))\les J(t,\xi;u(\cd)),\q\forall u(\cd)\in\cU[t,T]. \ee
      Such a $\bar u(\cd)$ is called an {\it open-loop optimal control of Problem (MF-SLQ) for  $(t,\xi)$}.
\item ({\it uniquely}) {\it open-loop solvable at $t\in[0,T)$} if for the given $t$ and any $\xi\in L_{\cF_t}(\Om;\dbR^n)$, it is (uniquely) open-loop solvable at  $(t,\xi)$.
      %
\item ({\it uniquely}) {\it open-loop solvable} if it is (uniquely) open-loop solvable at any initial pair $(t,\xi)\in\cD$.
\end{enumerate}
\end{definition}

\begin{definition}\rm
Let $\Th,\,\bar\Th:[t,T]\to\dbR^{m\times n}$ be two deterministic functions and $v:[t,T]\times\Om\to\dbR^m$
be an $\dbF$-progressively measurable process.
\begin{enumerate}[(i)]
\item We call $(\Th(\cd),\bar \Th(\cd),v(\cd))$ a {\it closed-loop strategy} on $[t,T]$ if $\Th(\cd),\bar \Th(\cd)\in L^2(t,T;\dbR^{m\times n})$
      and $v(\cd)\in L_\dbF^2(t,T;\dbR^m)$; that is,
      $$\int_t^T|\Th(s)|^2ds<\i, \q\int_t^T|\bar\Th(s)|^2ds<\i, \q \dbE\int_t^T|v(s)|^2ds<\i.$$
      The set of all closed-loop strategies $(\Th(\cd),\bar\Th(\cd),v(\cd))$ on $[t,T]$ is denoted by $\cQ[t,T]$.

\item A closed-loop strategy $(\Th^*(\cd),\bar\Th^*(\cd),v^*(\cd))\in\cQ[t,T]$
      is said to be {\it optimal} on $[t,T]$ if
      \begin{equation}\label{closed-optimal*}
      \begin{aligned}
        J(t,\xi;\Th^*(\cd)X^*(\cd)+\bar\Th^*(\cd)\dbE[X^*(\cd)]+v^*(\cd))\les J(t,\xi;\Th(\cd)X(\cd)+\bar\Th(\cd)\dbE[X(\cd)]+v(\cd)),\\
       \forall \xi\in L^2_{\cF_t}(\Om;\dbR^n),~\forall(\Th(\cd),\bar\Th(\cd),v(\cd))\in\cQ[t,T],
       \end{aligned}
      \end{equation}
      where $X^*(\cd)$ is the solution to the {\it closed-loop system} under $(\Th^*(\cd),\bar\Th^*(\cd),v^*(\cd))$
      (with the argument $s$ being suppressed in the coefficients and non-homogeneous terms):
      \bel{closed-syst*}\left\{\begin{aligned}
         dX^*(s) &=\big\{AX^*(s)+\bar A\dbE[X^*(s)]+B\big\{\Th^*X^*(s)+\bar\Th^*\dbE[X^*(s)]+v^*\big\} \\
                 &\hp{=\ }\qq +\bar B\dbE\big\{\Th^*X(s)+\bar\Th^*\dbE[X^*(s)]+v^*\big\}+b\big\}ds\\
                  &\hp{=\ }+\big\{CX^*(s)+\bar C\dbE[X^*(s)]+D\big\{\Th^*X^*(s)+\bar\Th^*\dbE[X^*(s)]+v^*\big\} \\
                 &\hp{=\ }\qq +\bar D\dbE\big\{\Th^*X(s)+\bar\Th^*\dbE[X^*(s)]+v^*\big\}+\si\big\}dW(s),\\
          X^*(t) &=\xi,
      \end{aligned}\right.\ee
      and $X(\cd)$ is the solution to the following closed-loop system under $(\Th(\cd),\bar\Th(\cd),v(\cd))$:
      \bel{closed-syst}\left\{\begin{aligned}
         dX(s) &=\big\{AX(s)+\bar A\dbE[X(s)]+B\big\{\Th X(s)+\bar\Th\dbE[X(s)]+v\big\} \\
                 &\hp{=\ }\qq +\bar B\dbE\big\{\Th X(s)+\bar\Th\dbE[X(s)]+v\big\}+b\big\}ds\\
                  &\hp{=\ }+\big\{CX(s)+\bar C\dbE[X(s)]+D\big\{\Th X(s)+\bar\Th\dbE[X(s)]+v\big\} \\
                 &\hp{=\ }\qq +\bar D\dbE\big\{\Th X(s)+\bar\Th\dbE[X(s)]+v\big\}+\si\big\}dW(s),\\
          X(t) &=\xi.
      \end{aligned}\right.\ee

\item If for any $t\in[0,T)$, a closed-loop optimal strategy (uniquely) exists on $[t,T]$,
      Problem (MF-SLQ) is said to be ({\it uniquely}) {\it closed-loop solvable}.
\end{enumerate}
\end{definition}
Motivated by \autoref{ex-1.1} and  \cite{Wang 2019}, we next introduce the  notion of weak closed-loop strategies for mean-field stochastic LQ problems.
\begin{definition}\rm\label{def-wcloop}
Let $\Th,\bar\Th:(t,T)\to\dbR^{m\times n}$ be two locally square-integrable deterministic functions and $v:(t,T)\times\Om\to\dbR^m$ be a locally square-integrable $\dbF$-progressively measurable process; that is, $\Th(\cd),\,\bar\Th(\cd)$ and $v(\cd)$ are such that for any $t^\prime,T^\prime\in(t,T)$,
$$\int_{t^\prime}^{T^\prime}|\Th(s)|^2ds<\i, \q\int_{t^\prime}^{T^\prime}|\bar\Th(s)|^2ds<\i, \q \dbE\int_{t^\prime}^{T^\prime}|v(s)|^2ds<\i.$$
\begin{enumerate}[(i)]
\item We call $(\Th(\cd),\bar\Th(\cd),v(\cd))$ a {\it weak closed-loop strategy} on $(t,T)$ if for any initial state $\xi\in L^2_{\cF_t}(\Om;\dbR^n)$,
      the outcome $u(\cd)\equiv\Th(\cd)X(\cd)+\bar\Th(\cd)\dbE[X(\cd)]+v(\cd)$ of $(\Th(\cd),\bar\Th(\cd),v(\cd))$ belongs to $\cU[t,T]$, where $X(\cd)$ is the solution to the {\it weak closed-loop system}:
      \bel{weak-syst}\left\{\begin{aligned}
         dX(s) &=\big\{AX(s)+\bar A\dbE[X(s)]+B\big\{\Th X(s)+\bar\Th\dbE[X(s)]+v\big\} \\
                 &\hp{=\ }\qq +\bar B\dbE\big\{\Th X(s)+\bar\Th\dbE[X(s)]+v\big\}+b\big\}ds\\
                  &\hp{=\ }+\big\{CX(s)+\bar C\dbE[X(s)]+D\big\{\Th X(s)+\bar\Th\dbE[X(s)]+v\big\} \\
                 &\hp{=\ }\qq +\bar D\dbE\big\{\Th X(s)+\bar\Th\dbE[X(s)]+v\big\}+\si\big\}dW(s),\\
          X(t) &=\xi.
      \end{aligned}\right.\ee
      The set of all weak closed-loop strategies is denoted by $\cQ_w[t,T]$.
\item A weak closed-loop strategy $(\Th^*(\cd),\bar\Th^*(\cd),v^*(\cd))$ is said to be {\it optimal} on $(t,T)$ if
      \bel{weak-closed-optimal*}\ba{ll}
       \ns\ds J(t,\xi;\Th^*(\cd)X^*(\cd)+\bar\Th^*(\cd)\dbE[X^*(\cd)]+v^*(\cd))\les J(t,\xi;\Th(\cd)X(\cd)+\bar\Th(\cd)\dbE[X(\cd)]+v(\cd)),\\
       \ns\ds\qq\qq\qq\qq\qq\qq\forall \xi\in L_{\cF_t}(\Om;\dbR^n),~\forall(\Th(\cd),\bar\Th(\cd),v(\cd))\in\cQ_w[t,T],\ea \ee
      where $X(\cd)$ is the solution of the weak closed-loop system \rf{weak-syst}, and $X^*(\cd)$ is the solution to the weak closed-loop system \rf{weak-syst} corresponding to $(t,\xi)$ and $(\Th^*(\cd),\bar\Th^*(\cd),v^*(\cd))$.
\item If for any $t\in[0,T)$, a weak closed-loop optimal strategy (uniquely) exists on $(t,T)$,
      we say Problem (SLQ) is ({\it uniquely}) {\it weakly closed-loop solvable}.
\end{enumerate}
\end{definition}
Similar to the case of classical stochastic LQ problem (see \cite{Wang 2019}, for example), we have the following equivalence: A (weak) closed-loop strategy $(\Th^*(\cd),\bar\Th^*(\cd),v^*(\cd))\in\cQ_w[t,T]$ is (weakly) closed-loop optimal
on $(t,T)$ if and only if
\bel{weak-closed-optimal1*}J(t,\xi;\Th^*(\cd)X^*(\cd)+\bar\Th^*(\cd)\dbE[X^*(\cd)]+v^*(\cd))\les J(t,\xi;u(\cd)),\q\forall \xi\in L^2_{\cF_{t}}(\Om;\dbR^n),~\forall u(\cd)\in\cU[t,T]. \ee

In the sequel, we shall use the following result, which is concerned with the open-loop and closed-loop solvabilities of Problem (MF-SLQ) and
whose proof  can be found in Sun \cite{Sun 2017}.

\begin{theorem}\label{thm:SLQ-ccloop-kehua}
Let {\rm\ref{ass:A1}} and {\rm\ref{ass:A2}} hold.
\begin{enumerate}[\rm(i)]
\item Suppose Problem {\rm(MF-SLQ)} is open-loop solvable. Then $J^0(0,0;u(\cd))\ges 0$ for all $u(\cd)\in \cU[0,T]$.
\item Suppose that there exists a constant $\l>0$ such that
      $$ J^0(0,0;u(\cd))\ges\l\dbE\int_0^T|u(s)|^2ds, \q\forall u(\cd)\in \cU[0,T]. $$
      Then the Riccati equation \rf{GRE1}
      %
       %
      %
      admits a unique solution $(P(\cd),\Pi(\cd))\in C([0,T];\dbS^n)\times C([0,T];\dbS^n)$ such that
      $$\Sigma\equiv R+D^\top P D\gg 0,\q\bar\Sigma\equiv R+\bar R+(D+\bar D)^\top P(D+\bar D)\gg 0.$$
      %
       %
      %
      Problem {\rm(MF-SLQ)} is uniquely closed-loop solvable and hence  uniquely open-loop solvable.
      The unique closed-loop optimal strategy $(\Th^*(\cd),\bar\Th^*(\cd),v^*(\cd))$ is given by
      $$\left\{\begin{aligned}
        \Th^* &= -\Sigma^{-1}\big(B^\top P+D^\top PC+S\big),\\
         \bar\Th^* &= \ti\Th^*-\Th^*,\q v^* = \varphi-\dbE[\varphi]+\bar\varphi,
      \end{aligned}\right.$$
      where
      $$\left\{\begin{aligned}
       \ti\Th^* &= -\bar\Sigma^{-1}[(B+\bar B)^\top\Pi+(D+\bar D)^\top P(C+\bar C)+S+\bar S],\\
      \varphi&= -\Sigma^{-1}\big(B^\top\eta+D^\top\z+D^\top P\si+\rho\big),\\
      \bar\varphi&=-\bar\Sigma^{-1}\big\{(B+\bar B)^\top\bar\eta+(D+\bar D)^\top (\dbE[\zeta]+P\dbE[\si])+\dbE[\rho]+\bar\rho\big\},
      \end{aligned}\right.$$
      with $(\eta(\cd),\zeta(\cd))$ and $\bar\eta(\cd)$ being the (adapted) solutions to the BSDE
      \bel{eta-beta}\left\{\begin{aligned}
         d\eta(s) &=-\big[(A+B\Th^*)^\top\eta(s) +(C+D\Th^*)^\top\z(s) +(C+D\Th^*)^\top P\si\\
                  &\hp{=-\big[} +\Th^{*\top}\rho +Pb +q\big]ds +\z(s) dW(s), \q s\in[t,T],\\
          \eta(T) &=g,
      \end{aligned}\right.\ee
      and ordinary differential equation
        \bel{bar-eta}\left\{\begin{aligned}
         &\dot{\bar\eta}(s)+\big[(A+\bar A)+(B+\bar B)\ti\Th^*)\big]^\top\bar\eta(s) +\ti\Th^{*\top}\big\{(D+\bar D)^\top\big(P\dbE[\si]+\dbE[\zeta]\big)+\dbE[\rho]+\bar\rho\big\}\\
                  &\hp{=-\big[} +(C+\bar C)^\top\big(P\dbE[\si]+\dbE[\zeta]\big)+\dbE[q]+\bar q+\Pi\dbE[b]=0, \q s\in[t,T],\\
          &\bar\eta(T) =\dbE[g]+\bar g,
      \end{aligned}\right.\ee
      respectively. The unique open-loop optimal control of Problem {\rm(MF-SLQ)} for the initial pair $(t,\xi)$ is given by
      \begin{align*}
       u^*(s) &= \Th^*(s)X^*(s)+\bar\Th^*(s)\dbE[X^*(s)]+v^*(s)\\
       &= \Th^*(s)\big\{X^*(s)-\dbE[X^*(s)]\big\}+\ti\Th^*(s)\dbE[X^*(s)]+v^*(s), \q s\in[t,T],
      \end{align*}
      where $X^*(\cd)$ is the solution to the corresponding closed-loop system \rf{closed-syst*}.
\end{enumerate}
\end{theorem}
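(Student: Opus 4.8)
The plan is to treat the two assertions separately, since they rest on different ideas. For part (i), I would use that at the initial pair $(0,0)$ the map $u(\cd)\mapsto J(0,0;u(\cd))$ is a quadratic functional on the Hilbert space $\cU[0,T]$ whose purely second-order part is exactly $J^0(0,0;\cd)$. Since $u(\cd)\mapsto X(\cd\,;0,0,u(\cd))$ is affine by \autoref{lmm:well-posedness-SDE}, one may write $J(0,0;u)=J^0(0,0;u)+2\lan\ell,u\ran+c$ for a fixed $\ell\in\cU[0,T]$ and a constant $c$ collecting the contributions of $b,\si,g,q,\rho$ and their barred counterparts. If Problem (MF-SLQ) were open-loop solvable at $(0,0)$ while $J^0(0,0;w)<0$ for some $w\in\cU[0,T]$, then along the ray $u=\bar u+\th w$ the term $\th^2J^0(0,0;w)$ would force $J(0,0;\bar u+\th w)\to-\i$ as $\th\to+\i$, contradicting the existence of a minimizer. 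Hence $J^0(0,0;\cd)\ges0$. This step is routine.

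For part (ii), I would first note that uniform convexity makes $u(\cd)\mapsto J(t,\xi;u(\cd))$ a continuous, coercive, strictly convex functional on $\cU[t,T]$ for every $(t,\xi)\in\cD$, so that a unique open-loop optimal control exists; unique open-loop solvability will then follow once the feedback representation is in place. The heart of the matter is the global solvability of the coupled GRE \eqref{GRE1}, for which the plan is to establish the mean-field analogue of the uniform-convexity$\,\Leftrightarrow\,$Riccati-solvability equivalence of \cite{Sun-Li-Yong 2016,Li-Sun-Yong 2016}. Concretely, I would decompose through $X=(X-\dbE[X])+\dbE[X]$ and $u=(u-\dbE[u])+\dbE[u]$, under which $J^0(0,0;\cd)$ splits into a fluctuation functional, governed by the standard data $(A,C,B,D;G,Q,S,R)$ and evaluated on the (necessarily zero-mean) controls $u-\dbE[u]$, and a mean functional. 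Restricting the hypothesis to zero-mean controls yields uniform convexity of the fluctuation subproblem -- a classical SLQ problem in the zero-mean class -- which by the cited theory produces the first Riccati equation's solution $P\in C([0,T];\dbS^n)$ with $\Sigma=R+D^\top PD\gg0$. Minimizing the fluctuation cost out for each fixed mean control then leaves a reduced, deterministic mean problem whose running weight is $P$-augmented (this is the origin of the term $(C+\bar C)^\top P(C+\bar C)$ in \eqref{GRE1}); its uniform convexity, again inherited from the hypothesis, furnishes $\Pi\in C([0,T];\dbS^n)$ with $\bar\Sigma\gg0$.

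The main obstacle is making this two-stage construction rigorous and, above all, global: one must propagate the uniform convexity to every truncated interval $[t,T]$ and to both subproblems, then convert the convexity constants into a priori $L^\i$-bounds on $P$ and (given $P$) on $\Pi$ that preclude finite-time blow-up, while securing the uniform lower bounds $\Sigma,\bar\Sigma\gg0$ from a local, interval-shrinking perturbation argument. A delicate point is that the fluctuation dynamics is coupled to the mean through the \emph{diffusion} coefficient, so the mean subproblem is not a pure deterministic LQ problem but carries the $P$-weighted fluctuation cost; I would handle this by fixing $P$ first and reading the $\Pi$-equation as the Riccati equation of the reduced mean problem exactly as it stands in \eqref{GRE1}.

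Finally, with $(P,\Pi)$ and the adjoint processes $(\eta,\z)$ from \eqref{eta-beta} and $\bar\eta$ from \eqref{bar-eta} in hand, I would close by completion of squares. Applying It\^o's formula to $\lan P(X-\dbE[X]),X-\dbE[X]\ran$ and to $\lan\Pi\dbE[X],\dbE[X]\ran$ along an arbitrary admissible $u(\cd)$ and regrouping, one should arrive at an identity
$$J(t,\xi;u)=c(t,\xi)+\dbE\int_t^T\blan\Sigma\,\pi_0,\pi_0\bran ds+\int_t^T\blan\bar\Sigma\,\bar\pi,\bar\pi\bran ds,$$
where $\pi_0=(u-\dbE[u])-\Th^*(X-\dbE[X])-(\varphi-\dbE[\varphi])$, $\bar\pi=\dbE[u]-\ti\Th^*\dbE[X]-\bar\varphi$, and $c(t,\xi)$ is independent of $u(\cd)$. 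Since $\Sigma,\bar\Sigma\gg0$, both integrals are nonnegative and vanish simultaneously precisely when $u(\cd)$ coincides with the outcome $\Th^*(X^*-\dbE[X^*])+\ti\Th^*\dbE[X^*]+v^*$; this proves that the stated strategy is closed-loop optimal, and the strict positivity of $\Sigma,\bar\Sigma$ gives uniqueness. The equivalence \eqref{weak-closed-optimal1*} then delivers the asserted open-loop optimal control for each $(t,\xi)$.
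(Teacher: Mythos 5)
This theorem is not proved in the paper at all: the authors state it as a known result and refer to Sun \cite{Sun 2017} for its proof, so your proposal can only be measured against the argument of that reference, whose overall architecture (mean--variance decomposition, a classical Riccati equation for $P$ plus a reduced ``mean'' Riccati equation for $\Pi$, then completion of squares) your sketch does mirror. Your part (i) is correct and standard. The genuine gap is in part (ii), at the very first and most important step: the production of $P$. You restrict the hypothesis to zero-mean controls, observe (correctly) that for $u$ with $\dbE[u]\equiv0$ the state started at $0$ has $\dbE[X]\equiv0$, so $J^0(0,0;u)$ coincides with the cost of the classical SLQ problem with data $(A,B,C,D;G,Q,S,R)$, and then invoke ``the cited theory'' of \cite{Sun-Li-Yong 2016} to get $P$ with $\Sigma=R+D^\top PD\gg0$. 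But the uniform-convexity-implies-Riccati-solvability theorem of \cite{Sun-Li-Yong 2016} requires $J^0_{cl}(0,0;u)\ges\l\,\dbE\int_0^T|u|^2ds$ for \emph{all} $u\in\cU[0,T]$, whereas your restriction yields coercivity only on the closed subspace of zero-mean controls. Coercivity of a quadratic form on a proper subspace does not pass to the whole space in general, so this step cannot be settled by citation. Closing it is precisely the nontrivial core of the mean-field result: one must either upgrade zero-mean coercivity of the classical functional to full coercivity --- which uses the deterministic coefficients and the Brownian filtration in an essential way (for instance, multiplying controls adapted to the shifted filtration $\sigma(W(r)-W(t):r\ges t)$ by $\mathrm{sgn}(W(t))$, which preserves the classical cost and produces zero-mean controls, followed by a conditioning argument for general controls and a passage $t\downarrow0$ with bounds uniform in $t$) --- or rerun the whole construction of \cite{Sun-Li-Yong 2016} inside the zero-mean class, checking that all feedback controls it builds stay in that class. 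Your sketch flags ``making the construction rigorous and global'' as the main obstacle, but offers no mechanism for this specific point.

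The rest of the plan is sound, with two remarks. First, the inheritance of uniform convexity by the reduced mean problem works only through the ``minimizing out'' formulation you state: for a deterministic control $\bar v$ one must take the fluctuation control to be the $P$-feedback $v=\Th^*(X-\dbE[X])$, so that the nonnegative $\Sigma$-square term vanishes and $J^{mean}(\bar v)=\inf_v J^0(0,0;v+\bar v)\ges\l\int_0^T|\bar v|^2ds$; with the naive choice $v=0$ the inequality runs the wrong way, since then $J^{mean}$ is $J^0$ \emph{minus} a nonnegative term. Second, your final completion-of-squares identity is correct in structure --- all cross terms between fluctuation and mean quantities vanish because $X-\dbE[X]$ and $u-\dbE[u]$ have zero mean while the mean quantities are deterministic --- and it does yield closed-loop optimality of the stated strategy; but the uniqueness assertions of the theorem (uniqueness of the solution $(P,\Pi)$ of \rf{GRE1} in the class where $\Sigma,\bar\Sigma\gg0$, and uniqueness of the closed-loop optimal strategy, not merely of the open-loop optimal control) still require separate, if short, arguments that the proposal does not address.
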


\section{A Perturbation Approach to Open-Loop Solvability}\label{Sec:3}

In this section, we shall study the open-loop solvability of Problem (MF-SLQ) under the following convexity condition:
\bel{cost-convex} J^0(0,0;u(\cd))\ges0, \q\forall u(\cd)\in\cU[0,T].\ee
By \autoref{thm:SLQ-ccloop-kehua} (i), the above condition is necessary for the open-loop solvability of Problem (MF-SLQ).
Condition \rf{cost-convex} means that the mapping $u(\cd)\mapsto J^0(0,0;u(\cd))$ is convex.
In fact, under the condition \rf{cost-convex}, one can  prove the mapping $u(\cd)\mapsto J(t,\xi;u(\cd))$ is convex for any initial pair $(t,\xi)\in\cD$.
We point out that the condition \rf{cost-convex} is not sufficient for the open-loop solvability of Problem (MF-SLQ)$^0$,
one counterexample could be found in \cite[Example 5.2]{Sun-Li-Yong 2016}.

\ms
For any $\e>0$, let us consider the LQ problem of minimizing the perturbed cost functional
\begin{eqnarray}\label{cost-e}
J_\e(t,\xi;u(\cd))&\3n\deq\3n& J(t,\xi;u(\cd))+\e\dbE\int_t^T|u(s)|^2ds\nn\\
&\3n=\3n& \dbE\big\{\lan GX(T),X(T)\ran + 2\lan g,X(T)\ran +\lan\bar G\dbE[X(T)],\dbE[X(T)]\ran + 2\lan \bar g,\dbE[X(T)]\ran\nn\\
         &\3n~\3n& +\int_t^T\bigg[\llan\begin{pmatrix}Q (s)& S(s)^\top \\ S(s) & R(s)+\e I_m\end{pmatrix}
                                 \begin{pmatrix}X(s) \\ u(s)\end{pmatrix},
                                 \begin{pmatrix}X(s) \\ u(s)\end{pmatrix}\rran +2\llan\begin{pmatrix}q(s) \\ \rho(s)\end{pmatrix},
                   \begin{pmatrix}X(s) \\ u(s)\end{pmatrix}\rran\bigg]ds\nn\\
&\3n~\3n& +\int_t^T\bigg[\llan\begin{pmatrix}\bar Q (s)& \bar S(s)^\top \\ \bar S(s) & \bar R(s)\end{pmatrix}
                                 \begin{pmatrix}\dbE[X(s)] \\ \dbE[u(s)]\end{pmatrix},
                                 \begin{pmatrix}\dbE[X(s)] \\ \dbE[u(s)]\end{pmatrix}\rran\nn\\
&\3n~\3n&       \qq\qq              +2\llan\begin{pmatrix}\bar q(s) \\ \bar \rho(s)\end{pmatrix},
                   \begin{pmatrix}\dbE[X(s)] \\ \dbE[u(s)]\end{pmatrix}\rran\bigg]ds\Bigg\}
\end{eqnarray}
subject to the state equation \rf{state}.
We denote this perturbed LQ problem by Problem (MF-SLQ)$_\e$ and its value function by $V_\e(\cd\,,\cd)$.
Then the cost functional of the homogeneous LQ problem associated with Problem (MF-SLQ)$_\e$ is give by
$$J^0_\e(t,\xi;u(\cd))=J^0(t,\xi;u(\cd))+\e\dbE\int_t^T|u(s)|^2ds.$$
Note that, by \rf{cost-convex}, we have
$$ J^0_{\e}(0,0;u(\cd))\ges\e\dbE\int_0^T|u(s)|^2ds, \q\forall u\in\mathcal{U}[0,T].$$
According to \autoref{thm:SLQ-ccloop-kehua} (ii), the above implies that the Riccati equation
\bel{Ric-e}\left\{\begin{aligned}
  & \dot P_\e + P_\e A + A^\top P_\e + C^\top P_\e C + Q \\
  & \hp{\dot P_\e} -(P_\e B+C^\top P_\e D+S^\top)(R+\e I_m+D^\top P_\e D)^{-1}(B^\top P_\e +D^\top P_\e C+S)=0, \\
   & \dot{\Pi}_\e+\Pi_\e(A+\bar A)+(A+\bar A)^\top\Pi_\e+Q+\bar Q+(C+\bar C)^\top P_\e(C+\bar C)\\
      &  -[\Pi_\e(B+\bar B)+(C+\bar C)^\top P_\e(D+\bar D)+(S+\bar S)^\top][R+\e I_m+\bar R+(D+\bar D)^\top P_\e(D+\bar D)]^{-1}\\
       & \hp{\dot \Pi} \cdot[(B+\bar B)^\top\Pi_\e+(D+\bar D)^\top P_\e(C+\bar C)+(S+\bar S)]=0, \\
  & P_\e(T)=G,\qq \Pi_\e(T)=G+\bar G
\end{aligned}\right.\ee
associated with Problem (MF-SLQ)$_\e$ admits a unique solution $(P_\e(\cd),\Psi_\e(\cd))\in C([0,T];\dbS^n)\times C([0,T];\dbS^n)$ such that
$$\Sigma_\e\equiv R+\e I_m+D^\top P D\gg 0,\q\bar\Sigma_\e\equiv R+\e I_m+\bar R+(D+\bar D)^\top P_\e(D+\bar D)\gg 0.$$
%
Define
\begin{align}
  \label{Th-e} &\Th_\e = -\Sigma_\e^{-1}(B^\top P_\e+D^\top P_\e C+S), \\
 \label{bar-Th-e}     &\bar\Th_\e = \ti\Th_\e-\Th_\e,\\
    \label{v-e}  & v_\e = \varphi_\e-\dbE[\varphi_\e]+\bar\varphi_\e,
\end{align}
where
\bel{def-ti-Th-Th-v-e}\left\{\begin{aligned}
&\ti\Th_\e =-\bar\Sigma_\e^{-1}[(B+\bar B)^\top\Pi_\e+(D+\bar D)^\top P_\e(C+\bar C)+S+\bar S],\\
      &\varphi_\e= -\Sigma_\e^{-1}\big(B^\top\eta_\e+D^\top\z_\e+D^\top P_\e\si+\rho\big),\\
      &\bar\varphi_\e=-\bar\Sigma_\e^{-1}\big\{(B+\bar B)^\top\bar\eta_\e+(D+\bar D)^\top (\dbE[\zeta_\e]+P_\e\dbE[\si])+\dbE[\rho]+\bar\rho\big\},\\
      \end{aligned}\right.\ee
with $(\eta_\e(\cd),\z_\e(\cd))$ being the unique adapted solution to the BSDE
\bel{eta-zeta-e}\left\{\begin{aligned}
  d\eta_\e(s) &=-\big[(A+B\Th_\e)^\top\eta_\e(s)+(C+D\Th_\e)^\top\z_\e(s) + (C+D\Th_\e)^\top P_\e\si \\
              &\hp{=-\big[} +\Th_\e^\top\rho + P_\e b +q\big]ds + \z_\e(s) dW(s), \q s\in[0,T],\\
   \eta_\e(T) &=g,
\end{aligned}\right.\ee
and $\bar\eta_\e(\cd)$ being the solution to the following ODE
        \bel{bar-eta-e}\left\{\begin{aligned}
         &\dot{\bar\eta}_\e(s)+\big[(A+\bar A)+(B+\bar B)\ti\Th_\e)\big]^\top\bar\eta_\e(s) +\ti\Th^{\top}_\e\big\{(D+\bar D)^\top\big(P_\e\dbE[\si]+\dbE[\zeta_\e]\big)+\dbE[\rho]+\bar\rho\big\}\\
                  &\hp{=-\big[} +(C+\bar C)^\top\big(P\dbE[\si]+\dbE[\zeta_\e]\big)+\dbE[q]+\bar q+\Pi_\e\dbE[b]=0, \q s\in[t,T],\\
          &\bar\eta_\e(T) =\dbE[g]+\bar g.
      \end{aligned}\right.\ee
Let $X_\e(\cd)$ be the unique solution to the closed-loop system
      \bel{closed-loop-syst-e}\left\{\begin{aligned}
         dX_\e(s) &=\big\{AX_\e(s)+\bar A\dbE[X_\e(s)]+B\big\{\Th_\e X_\e(s)+\bar\Th_\e\dbE[X_\e(s)]+v_\e\big\} \\
                 &\hp{=\ }\qq +\bar B\dbE\big\{\Th_\e X_\e(s)+\bar\Th_\e\dbE[X_\e(s)]+v_\e\big\}+b\big\}ds\\
                  &\hp{=\ }+\big\{CX_\e(s)+\bar C\dbE[X_\e(s)]+D\big\{\Th_\e X_\e(s)+\bar\Th_\e\dbE[X_\e(s)]+v_\e\big\} \\
                 &\hp{=\ }\qq +\bar D\dbE\big\{\Th_\e X_\e(s)+\bar\Th_\e\dbE[X_\e(s)]+v_\e\big\}+\si\big\}dW(s),\\
          X_\e(t) &=\xi,
      \end{aligned}\right.\ee
then the unique open-loop optimal control of Problem (MF-SLQ)$_\e$ for the initial pair $(t,\xi)$ is given by
\bel{u-e} u_\e(s) = \Th_\e(s)X_\e(s)+\bar\Th_e\dbE[X_\e(s)] + v_\e(s), \q s\in[t,T]. \ee

\ss

Now we are ready to state the main result of this section, which provides a characterization of the open-loop
solvability of Problem (MF-SLQ) in terms of the family $\{u_\e(\cd)\}_{\e>0}$ defined by \rf{u-e}.

\begin{theorem}\label{thm:SLQ-oloop-kehua}
Let {\rm\ref{ass:A1}}, {\rm\ref{ass:A2}} and \rf{cost-convex} hold.
For any given initial pair $(t,\xi)\in\cD$, let $u_\e(\cd)$ be defined by \rf{u-e}, which is the outcome of the closed-loop optimal strategy $(\Th_\e(\cd),\bar\Th_\e(\cd),v_\e(\cd))$ of Problem {\rm(MF-SLQ)}$_\e$. Then the following statements are equivalent:
\begin{enumerate}[\rm(i)]
\item Problem {\rm(MF-SLQ)} is open-loop solvable at $(t,\xi)$;
\item the family $\{u_\e(\cd)\}_{\e>0}$ is bounded in the Hilbert space $L^2_\dbF(t,T;\dbR^m)$, i.e.,
      $$\sup_{\e>0}\,\dbE\int_t^T|u_\e(s)|^2ds <\i;$$
\item the family $\{u_\e(\cd)\}_{\e>0}$ is convergent strongly in $L^2_\dbF(t,T;\dbR^m)$ as $\e\to0$.
\end{enumerate}
Whenever {\rm(i)}, {\rm(ii)}, or {\rm(iii)} is satisifed, the family $\{u_\e(\cd)\}_{\e> 0}$ converges strongly to an open-loop
optimal control of Problem {\rm(MF-SLQ)} for the initial pair $(t,\xi)$ as $\e\to0$.
\end{theorem}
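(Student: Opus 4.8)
The plan is to recast $u\mapsto J(t,\xi;u)$ as a quadratic functional on the Hilbert space $L^2_\dbF(t,T;\dbR^m)$ and to read off everything from the first-order optimality conditions of the regularized problems. Write $\|\cd\|$ for the norm of $L^2_\dbF(t,T;\dbR^m)$, so that $\|u\|^2=\dbE\int_t^T|u(s)|^2ds$. Since the solution of \rf{state} depends affinely on $u$ and the cost is a continuous quadratic form (by \autoref{lmm:well-posedness-SDE}), there are a bounded self-adjoint operator $\cM_2$ on $L^2_\dbF(t,T;\dbR^m)$, an element $\cM_1\in L^2_\dbF(t,T;\dbR^m)$ and a constant $\cM_0$ with
$$J(t,\xi;u)=\lan\cM_2u,u\ran+2\lan\cM_1,u\ran+\cM_0,\qq \lan\cM_2u,u\ran=J^0(t,0;u),$$
and the convexity condition \rf{cost-convex}, which forces $u\mapsto J(t,\xi;u)$ to be convex for every $(t,\xi)$, is precisely $\cM_2\ges0$. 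Then $J_\e(t,\xi;u)=\lan\cM_2u,u\ran+\e\|u\|^2+2\lan\cM_1,u\ran+\cM_0$; by \autoref{thm:SLQ-ccloop-kehua}(ii) applied to Problem (MF-SLQ)$_\e$ the outcome $u_\e$ of \rf{u-e} is its unique minimizer, so it obeys the stationarity relation
$$\cM_2u_\e+\e u_\e+\cM_1=0.$$

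I would prove the cycle (i)$\Rightarrow$(ii)$\Rightarrow$(iii)$\Rightarrow$(i). For (i)$\Rightarrow$(ii), let $\bar u$ be open-loop optimal at $(t,\xi)$; from $J_\e(t,\xi;u_\e)\les J_\e(t,\xi;\bar u)$ and $J(t,\xi;u_\e)\ges J(t,\xi;\bar u)=V(t,\xi)$ the $J$-terms cancel and leave $\e\|u_\e\|^2\les\e\|\bar u\|^2$, whence $\|u_\e\|\les\|\bar u\|$ uniformly in $\e$. The implication (iii)$\Rightarrow$(ii) is trivial. For (iii)$\Rightarrow$(i), suppose $u_\e\to u_0$ strongly; letting $\e\downarrow0$ in
$$J(t,\xi;u_\e)+\e\|u_\e\|^2\les J(t,\xi;u)+\e\|u\|^2$$
and using the $L^2$-continuity of $u\mapsto J(t,\xi;u)$ together with the boundedness of $\|u_\e\|$ yields $J(t,\xi;u_0)\les J(t,\xi;u)$ for all $u\in\cU[t,T]$, so $u_0$ is open-loop optimal; this simultaneously proves the final assertion of the theorem.

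The real work, and the expected obstacle, is (ii)$\Rightarrow$(iii): upgrading boundedness to strong convergence. First I would establish monotonicity of the norm. Subtracting the two optimality inequalities $J_\e(t,\xi;u_\e)\les J_\e(t,\xi;u_{\e'})$ and $J_{\e'}(t,\xi;u_{\e'})\les J_{\e'}(t,\xi;u_\e)$ makes the $J$-terms cancel and leaves $(\e-\e')\|u_\e\|^2\les(\e-\e')\|u_{\e'}\|^2$; hence $\e\mapsto\|u_\e\|^2$ is nondecreasing as $\e\downarrow0$, and by (ii) it converges to a finite limit. Next I would extract a Cauchy estimate from stationarity alone: for $\e>\e'>0$, subtracting $\cM_2u_\e+\e u_\e+\cM_1=0$ from $\cM_2u_{\e'}+\e'u_{\e'}+\cM_1=0$ gives $\cM_2(u_\e-u_{\e'})=\e'u_{\e'}-\e u_\e$, and pairing with $u_\e-u_{\e'}$ while using $\cM_2\ges0$ produces, after elementary algebra, $\|u_\e-u_{\e'}\|^2\les\|u_{\e'}\|^2-\|u_\e\|^2$. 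Since $\|u_\e\|^2$ converges, the right-hand side tends to $0$, so $\{u_\e\}$ is Cauchy and converges strongly to some $u_0$. The two delicate ingredients here are the abstract quadratic representation with the nonnegativity $\cM_2\ges0$ (supplied by \rf{cost-convex}) and the pairing estimate, where the sign of $\cM_2$ is exactly what makes the cross term controllable; the remaining limiting arguments are routine.
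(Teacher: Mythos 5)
Your proposal is correct, and for the hard implication (ii) $\Rightarrow$ (iii) it takes a genuinely different route from the paper. The step (i) $\Rightarrow$ (ii) is the same in both (it is the paper's estimate \rf{bound-of-ue}), and (iii) $\Rightarrow$ (ii) is trivial in both; but from there the paper argues by weak compactness: it extracts weakly convergent subsequences, identifies every weak subsequential limit as an open-loop optimal control using sequential weak lower semicontinuity of the convex functional $J(t,\xi;\cd)$ together with \autoref{lemma-V} ($V_\e\to V$), shows all such limits coincide via a convexity/parallelogram argument, and only then upgrades weak to strong convergence through convergence of norms. You instead work from the Hilbert-space normal equation $(\cM_2+\e I)u_\e+\cM_1=0$, obtaining the monotonicity $\|u_\e\|\les\|u_{\e'}\|$ for $\e>\e'$ and the quantitative Cauchy estimate $\|u_\e-u_{\e'}\|^2\les\|u_{\e'}\|^2-\|u_\e\|^2$; this is the classical Tikhonov-regularization mechanism, and it needs no subsequence extraction, no weak topology, and no appeal to \autoref{lemma-V} at all, since your cycle (i)$\Rightarrow$(ii)$\Rightarrow$(iii)$\Rightarrow$(i) closes with a simple continuity passage. (One small point of bookkeeping: your Cauchy estimate uses \emph{both} the positivity pairing and the previously established monotonicity, since the pairing alone gives $(\e+\e')\lan u_\e,u_{\e'}\ran\ges\e\|u_\e\|^2+\e'\|u_{\e'}\|^2$, and one then needs $\|u_{\e'}\|\ges\|u_\e\|$ to conclude $\lan u_\e,u_{\e'}\ran\ges\|u_\e\|^2$; your ordering makes this legitimate, but the dependence should be stated.) The price of your route is the two structural facts asserted up front, which deserve proof: the representation $J(t,\xi;u)=\lan\cM_2u,u\ran+2\lan\cM_1,u\ran+\cM_0$ with $\lan\cM_2u,u\ran=J^0(t,0;u)$ (this follows from the affine dependence of the state on $u$, \autoref{lmm:well-posedness-SDE}, and Riesz representation), and the fact that \rf{cost-convex}, stated at $(0,0)$, forces $\cM_2\ges0$ for the given $t$ (extend $u\in\cU[t,T]$ by zero to $[0,T]$; the paper incurs the same debt when it invokes convexity of $J(t,\xi;\cd)$ without proof). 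Also note that \autoref{thm:SLQ-ccloop-kehua}(ii) does certify $u_\e$ as the unique minimizer of $J_\e(t,\xi;\cd)$, so your stationarity relation is legitimate. In sum, your argument is more elementary and quantitative, showing directly that the whole family is Cauchy; the paper's softer argument uses only convexity and continuity of the cost, never writes down the normal equation, and its intermediate result (uniqueness of weak subsequential limits) has some independent interest.
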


To prove \autoref{thm:SLQ-oloop-kehua}, we first present the following lemma, which is borrowed from  \cite[Theorem 3.2]{Sun 2017}.

\begin{lemma}\label{lemma-V}
Let {\rm\ref{ass:A1}} and {\rm\ref{ass:A2}} hold. Then for any initial pair $(t,\xi)\in\cD$,
\bel{Ve-goto-V} \lim_{\e\downarrow0}V_\e(t,\xi)=V(t,\xi). \ee
\end{lemma}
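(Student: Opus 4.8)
The plan is to obtain \rf{Ve-goto-V} by sandwiching $V_\e(t,\xi)$ between a trivial lower bound and an upper bound produced by a near-optimal control, so that the $\limsup$ and $\liminf$ as $\e\downarrow0$ are forced to coincide with $V(t,\xi)$. No compactness or Riccati machinery is needed: the argument rests only on the defining relation $J_\e(t,\xi;u(\cd))=J(t,\xi;u(\cd))+\e\dbE\int_t^T|u(s)|^2ds$ together with the structure of the admissible class $\cU[t,T]$.

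First I would record the lower bound. Since the penalty $\e\dbE\int_t^T|u(s)|^2ds$ is nonnegative, we have $J_\e(t,\xi;u(\cd))\ges J(t,\xi;u(\cd))$ for every $u(\cd)\in\cU[t,T]$; taking the infimum over $u(\cd)$ gives $V_\e(t,\xi)\ges V(t,\xi)$ for all $\e>0$, whence $\liminf_{\e\downarrow0}V_\e(t,\xi)\ges V(t,\xi)$. The same comparison shows that $\e\mapsto V_\e(t,\xi)$ is nondecreasing, so the limit as $\e\downarrow0$ exists in $[-\i,+\i]$; this monotonicity is convenient but not strictly necessary for the conclusion.

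The substantive step is the matching upper bound. Fix $\delta>0$ and choose a near-optimal control $u^\delta(\cd)\in\cU[t,T]$ with $J(t,\xi;u^\delta(\cd))\les V(t,\xi)+\delta$ (when $V(t,\xi)=-\i$, choose instead $u^\delta(\cd)$ with $J(t,\xi;u^\delta(\cd))\les-1/\delta$ and run the identical computation). Because $u^\delta(\cd)$ is also a fixed admissible control for the perturbed problem, the definition of $V_\e$ yields
$$V_\e(t,\xi)\les J_\e(t,\xi;u^\delta(\cd))=J(t,\xi;u^\delta(\cd))+\e\dbE\int_t^T|u^\delta(s)|^2ds\les V(t,\xi)+\delta+\e\,\dbE\int_t^T|u^\delta(s)|^2ds.$$
The decisive point is that $\dbE\int_t^T|u^\delta(s)|^2ds$ is finite (as $u^\delta(\cd)\in\cU[t,T]$) and independent of $\e$: because $u^\delta(\cd)$ is frozen before $\e$ is sent to $0$, the penalty term vanishes in the limit. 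Letting $\e\downarrow0$ gives $\limsup_{\e\downarrow0}V_\e(t,\xi)\les V(t,\xi)+\delta$, and since $\delta>0$ is arbitrary, $\limsup_{\e\downarrow0}V_\e(t,\xi)\les V(t,\xi)$.

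Combining the two bounds produces $\limsup_{\e\downarrow0}V_\e(t,\xi)\les V(t,\xi)\les\liminf_{\e\downarrow0}V_\e(t,\xi)$; as $\liminf\les\limsup$ always holds, all three quantities agree, which is precisely \rf{Ve-goto-V}. The one place where care is genuinely required is that \ref{ass:A1} and \ref{ass:A2} alone do not enforce the convexity condition \rf{cost-convex}, so $V(t,\xi)$ may a priori equal $-\i$; the argument above is deliberately arranged to remain valid in that case, and this is exactly why one must fix the near-optimal control first and only then send $\e\downarrow0$, rather than attempting to control a penalty term along an $\e$-dependent minimizing sequence.
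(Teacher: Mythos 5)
Your proof is correct, and it matches the paper's own treatment: the paper does not prove this lemma itself but borrows it from \cite[Theorem 3.2]{Sun 2017}, whose argument is precisely your sandwich, in the streamlined form $V(t,\xi)\les V_\e(t,\xi)\les J(t,\xi;u(\cd))+\e\,\dbE\int_t^T|u(s)|^2ds$ for each \emph{fixed} $u(\cd)\in\cU[t,T]$, followed by $\e\downarrow0$ and then the infimum over $u(\cd)$ (which also disposes of the case $V(t,\xi)=-\i$ automatically, making your separate $\delta$-near-optimal bookkeeping equivalent but slightly more verbose). Your observations that no convexity is needed and that the control must be frozen before sending $\e\downarrow0$ are both accurate.
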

%
%
%
%
%
%
%
%

\no\it Proof of \autoref{thm:SLQ-oloop-kehua}. \rm
We begin by proving the implication  (i) $\Rightarrow$ (ii).
Let $v^*(\cd)$ be an open-loop optimal control of Problem (MF-SLQ) for the initial pair $(t,\xi)$. Then for any $\e>0$,
$$
\left\{
\begin{aligned}
  V_\e(t,\xi) &\les J(t,\xi;v^*(\cd))+\e\dbE\int_t^T|v^*(s)|^2ds = V(t,\xi) + \e\dbE\int_t^T|v^*(s)|^2ds,\\
  V_\e(t,\xi) & = J(t,\xi;u_\e(\cd)) + \e\dbE\int_t^T|u_\e(s)|^2ds \ges V(t,\xi) +\e\dbE\int_t^T|u_\e(s)|^2ds, \end{aligned}\right.$$
which yields
\bel{bound-of-ue} \dbE\int_t^T|u_\e(s)|^2ds \les {V_\e(t,\xi)-V(t,\xi)\over\e} \les \dbE\int_t^T|v^*(s)|^2ds. \ee
This shows that $\{u_\e(\cd)\}_{\e>0}$ is bounded in $L^2_\dbF(t,T;\dbR^m)$.

\ms

We next show that (ii) $\Rightarrow$ (i).
Since $\{u_\e(\cd)\}_{\e>0}$ is bounded in the Hilbert space  $L^2_\dbF(t,T;\dbR^m)$, there is a subsequence
$\{\e_k\}^\i_{k=1}$ of $\{\e\}_{\e>0}$ with $\lim_{k\to\i}\e_k=0$ such that $\{u_{\e_k}(\cd)\}$
converges weakly to some $u^*(\cd)\in L^2_\dbF(t,T;\dbR^m)$.
Note that the mapping $u(\cd)\mapsto J(t,\xi;u(\cd))$ is sequentially weakly lower semicontinuous because it is continuous and convex.
By \autoref{lemma-V} , we have
\begin{align*}
J(t,\xi;u^*(\cd))&\les \liminf_{k\to\i} J(t,\xi;u_{\e_k}(\cd)) \\
          &= \liminf_{k\to\i}\lt[ V_{\e_k}(t,\xi)-\e_k\dbE\int_t^T|u_{\e_k}(s)|^2ds \rt]= V(t,\xi),
\end{align*}
which implies that $u^*(\cd)$ is an open-loop optimal control of Problem (MF-SLQ) for $(t,\xi)$.

\ms

The implication (iii) $\Rightarrow$ (ii) is trivially true.

\ms

Finally, we prove the implication (ii) $\Rightarrow$ (iii). The proof is divided into two steps.

\ms

{\it Step 1:  The family $\{u_\e(\cd)\}_{\e>0}$ converges weakly to an open-loop optimal control of Problem {\rm(MF-SLQ)} for the initial pair $(t,\xi)$ as $\e\to0$.}

\ms

To verify this, it suffices to show that every weakly convergent subsequence of $\{u_\e(\cd)\}_{\e>0}$ has the same weak limit.
Let $u_i^*(\cd)$; $i=1,2$, be the weak limits of two different weakly convergent subsequences $\{u_{i,\e_k}(\cd)\}_{k=1}^\infty$ $(i=1,2)$ of $\{u_\e(\cd)\}_{\e>0}$. Then similar the proof of (ii) $\Rightarrow$ (i), it is clear to see that both $u_1^*(\cd)$ and $u^*_2(\cd)$ are optimal for $(t,\xi)$.
Thus, by the convexity of the mapping $u(\cd)\mapsto J(t,\xi;u(\cd))$, we have
$$ J\lt(t,\xi;{u_1^*(\cd)+u_2^*(\cd)\over 2}\rt)\les {1\over2}J(t,\xi;u_1^*(\cd)) +{1\over2}J(t,\xi;u_2^*(\cd))= V(t,\xi). $$
This shows that ${u_1^*(\cd)+u_2^*(\cd)\over 2}$ is also an optimal control of Problem (MF-SLQ) at $(t,\xi)$. Then we can repeat the argument employed in the proof of (i) $\Rightarrow$ (ii), replacing $v^*(\cd)$ by ${u_1^*(\cd)+u_2^*(\cd)\over 2}$, to obtain (see \rf{bound-of-ue})
$$ \dbE\int_t^T|u_{i,\e_k}(s)|^2ds \les \dbE\int_t^T\lt|{u_1^*(s)+u_2^*(s)\over 2}\rt|^2ds, \q i=1,2. $$
Taking inferior limits on the both sides of the above inequality then yields
$$ \dbE\int_t^T|u_i^*(s)|^2ds \les \dbE\int_t^T\lt|{u_1^*(s)+u_2^*(s)\over 2}\rt|^2ds, \q i=1,2. $$
Adding the above two inequalities and then multiplying by $2$, we get
$$ 2\lt[\dbE\int_t^T|u_1^*(s)|^2ds+\dbE\int_t^T|u_2^*(s)|^2ds\rt] \les \dbE\int_t^T|u_1^*(s)+u_2^*(s)|^2ds. $$
By shifting the integral on the right-hand side to the left-hand side, we have
$$ \dbE\int_t^T|u_1^*(s)-u_2^*(s)|^2ds\les 0. $$
It follows that $u_1^*(\cd)=u_2^*(\cd)$, which establishes the claim.

\ms

{\it Step 2: The family $\{u_\e(\cd)\}_{\e>0}$ converges strongly as $\e\to0$. }

\ms

According to Step 1, the family $\{u_\e(\cd)\}_{\e>0}$ converges weakly to an open-loop optimal control $u^*(\cd)$ of Problem {\rm(MF-SLQ)} for $(t,\xi)$ as $\e\to0$.
Similar to \rf{bound-of-ue} (with $u^*(\cd)$ replacing $v^*(\cd)$), we get
\bel{t-3-5}\dbE\int_t^T|u_\e(s)|^2ds \les \dbE\int_t^T|u^*(s)|^2ds, \q\forall\e>0.\ee
On the other hand, since $u^*(\cd)$ is the weak limit of $\{u_\e(\cd)\}_{\e>0}$, we have
\bel{t-3-3-6} \dbE\int_t^T|u^*(s)|^2ds \les \liminf_{\e\to0} \dbE\int_t^T|u_\e(s)|^2ds.\ee
The above, together with \rf{t-3-5}, yields that
 $$\lim_{\e\to0}\dbE\int_t^T|u_\e(s)|^2ds=\dbE\int_t^T|u^*(s)|^2ds.$$
Thus, recalling that $\{u_\e(\cd)\}_{\e>0}$ converges weakly to $u^*(\cd)$, we have
\begin{align*}
  &\lim_{\e\to0}\dbE\int_t^T|u_\e(s)-u^*(s)|^2ds \\
  &\q =\lim_{\e\to0}\lt[\dbE\int_t^T|u_\e(s)|^2ds + \dbE\int_t^T|u^*(s)|^2ds -2\,\dbE\int_t^T\lan u^*(s),u_\e(s)\ran ds\rt]\\
   &\q =\dbE\int_t^T|u^*(s)|^2ds + \dbE\int_t^T|u^*(s)|^2ds -2\,\dbE\int_t^T\lan u^*(s),u^*(s)\ran ds=0.
\end{align*}
This means that $\{u_\e(\cd)\}_{\e>0}$ converges strongly to $u^*(\cd)$ as $\e\to0$. $\hfill\qed$

\begin{remark}\rm
A similar result first appeared in \cite{Sun-Li-Yong 2016} for the classical stochastic LQ problem.
After that, Sun \cite{Sun 2017} extended it to the mean-field case.
More precisely, they found that if Problem (MF-SLQ) is open-loop solvable at $(t,\xi)$, then the limit of any weakly/strongly convergent subsequence of $\{u_\e(\cd)\}_{\e>0}$ is an open-loop optimal control for $(t,\xi)$.
Our result refines that in \cite{Sun 2017} by showing the family $\{u_\e(\cd)\}_{\e>0}$ itself is strongly convergent when Problem (MF-SLQ) is open-loop solvable. 
\end{remark}

\section{Weak Closed-Loop Solvability}\label{Sec:4}
In this section, we shall establish the equivalence between open-loop and weak closed-loop solvabilities of Problem (MF-SLQ).
In fact, we will show that $\Th_\e(\cd)$, $\bar\Th_\e(\cd)$, $v_\e(\cd)$ defined by \rf{Th-e}, \rf{bar-Th-e} and \rf{v-e} converge locally in $(0,T)$, and that the limit $(\Th^*(\cd),\bar\Th^*(\cd),v^*(\cd))$ is a weak closed-loop optimal strategy.
Different from the classical stochastic LQ problems, there are two deterministic functions  $\Th_\e(\cd),\,\bar\Th_\e(\cd)$ and one $\dbF$-progressively measurable process $v_\e(\cd)$ in the optimal closed-loop strategy of Problem (MF-SLQ)$_\e$. In order to work separately with them, we introduce the following two lemmas.
The first one will enable us to work separately with $(\Th_\e(\cd),\bar\Th_\e(\cd))$ and $v_\e(\cd)$.
Recall that the associated Problem (MF-SLQ)$^0$ is to minimize \rf{cost0} subject to \rf{state0}.
\begin{lemma}\label{lmm:SLQ-SLQ0}
Let {\rm\ref{ass:A1}} and {\rm\ref{ass:A2}} hold.
If Problem {\rm(MF-SLQ)} is open-loop solvable, then so is Problem {\rm(MF-SLQ)$^0$}.
\end{lemma}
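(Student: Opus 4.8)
The plan is to fix an arbitrary initial pair $(t,\xi_0)\in\cD$ and to exhibit an open-loop optimal control of Problem (MF-SLQ)$^0$ at $(t,\xi_0)$; since $(t,\xi_0)$ is arbitrary, this yields the open-loop solvability of Problem (MF-SLQ)$^0$. The main device is the perturbation characterization \autoref{thm:SLQ-oloop-kehua}, which I intend to apply to the homogeneous problem itself. For this to be legitimate I first record that the convexity condition \rf{cost-convex} is in force: by hypothesis Problem (MF-SLQ) is open-loop solvable, so \autoref{thm:SLQ-ccloop-kehua}(i) gives $J^0(0,0;u(\cd))\ges0$ for all $u(\cd)\in\cU[0,T]$. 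Since the homogeneous problem trivially satisfies {\rm\ref{ass:A1}} and {\rm\ref{ass:A2}} as well, \autoref{thm:SLQ-oloop-kehua} is available for Problem (MF-SLQ)$^0$, and for every $\e>0$ the Riccati equation \rf{Ric-e} is uniquely solvable, giving the explicit perturbed optimal controls of both the full and the homogeneous problems.

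The key observation is that the Riccati pair $(P_\e,\Pi_\e)$ — and hence the feedback gains $\Th_\e(\cd)$, $\bar\Th_\e(\cd)$ defined in \rf{Th-e}--\rf{bar-Th-e} — depend only on the coefficients of the state equation and on the weighting matrices, not on the nonhomogeneous terms $b,\si,q,\bar q,\rho,\bar\rho,g,\bar g$. For Problem (MF-SLQ)$^0$ all of these vanish, so $\eta_\e,\z_\e,\bar\eta_\e$ vanish and the correction \rf{v-e} reduces to $v^0_\e\equiv0$, while $\Th_\e,\bar\Th_\e$ are unchanged. Writing $u_\e(\cd\,;t,\xi)$ for the perturbed optimal control \rf{u-e} of the \emph{full} problem at $(t,\xi)$, the perturbed optimal control of Problem (MF-SLQ)$^0$ at $(t,\xi_0)$ is therefore $u^0_\e=\Th_\e X^0_\e+\bar\Th_\e\dbE[X^0_\e]$, where $X^0_\e$ solves the homogeneous closed-loop system, namely \rf{closed-loop-syst-e} with $b,\si,v_\e$ deleted, started from $\xi_0$. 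By linearity of \rf{closed-loop-syst-e} in $(X_\e,b,\si,v_\e,\xi)$, the difference $X_\e(\cd\,;t,\xi_0)-X_\e(\cd\,;t,0)$ satisfies exactly this homogeneous closed-loop system with initial value $\xi_0$, hence coincides with $X^0_\e$. Subtracting the two instances of \rf{u-e} (the common term $v_\e$ cancels) then yields the decomposition
$$u^0_\e(\cd)=u_\e(\cd\,;t,\xi_0)-u_\e(\cd\,;t,0).$$

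Finally I would invoke open-loop solvability of the full problem at \emph{both} pairs $(t,\xi_0)$ and $(t,0)$, available because Problem (MF-SLQ) is solvable at every initial pair in $\cD$: by the implication (i)$\Rightarrow$(ii) of \autoref{thm:SLQ-oloop-kehua}, each of $\{u_\e(\cd\,;t,\xi_0)\}_{\e>0}$ and $\{u_\e(\cd\,;t,0)\}_{\e>0}$ is bounded in $L^2_\dbF(t,T;\dbR^m)$, whence so is $\{u^0_\e\}_{\e>0}$ by the displayed identity. Applying \autoref{thm:SLQ-oloop-kehua} to Problem (MF-SLQ)$^0$, the implication (ii)$\Rightarrow$(i) shows that Problem (MF-SLQ)$^0$ is open-loop solvable at $(t,\xi_0)$, completing the argument. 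I expect the only delicate point to be the bookkeeping behind the decomposition of $u^0_\e$: one must check carefully that zeroing the nonhomogeneous data leaves $(P_\e,\Pi_\e,\Th_\e,\bar\Th_\e)$ untouched and annihilates $v_\e$, so that the $v_\e$-driven parts of the two full-problem states cancel and the residual difference is precisely the homogeneous state $X^0_\e$. Once this is in place, the two applications of \autoref{thm:SLQ-oloop-kehua} are routine.
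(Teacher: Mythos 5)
Your proposal is correct and follows essentially the same route as the paper's proof: both reduce the homogeneous perturbed control to the difference $u^0_\e = u_\e(\cd\,;t,\xi_0)-u_\e(\cd\,;t,0)$, using that $(P_\e,\Pi_\e,\Th_\e,\bar\Th_\e)$ are unaffected by the nonhomogeneous data while $v_\e$ vanishes for Problem (MF-SLQ)$^0$ and cancels in the subtraction, and then apply \autoref{thm:SLQ-oloop-kehua} twice (boundedness from solvability at $(t,\xi_0)$ and $(t,0)$, then back to solvability of the homogeneous problem). Your explicit verification of the convexity condition \rf{cost-convex} via \autoref{thm:SLQ-ccloop-kehua}(i) is a point the paper leaves implicit, but it does not change the argument.
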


\begin{proof}
Let $(t,\xi)\in\cD$ be arbitrary initial pair.
By the definition of Problem (MF-SLQ)$^0$, we have $b(\cd),\bar b(\cd),\si(\cd),\bar\si(\cd),g,\bar g,q(\cd),\bar q(\cd),\rho(\cd).\bar\rho(\cd)=0$.
It follows that the solutions $(\eta_\e(\cd),\z_\e(\cd))$ to BSDE \rf{eta-zeta-e} and $\bar\eta_\e(\cd)$ to ODE \rf{bar-eta-e} are identically $(0,0)$ and $0$, respectively.
Hence the process $v_\e(\cd)$ defined by \rf{v-e} is identically zero.
So by \autoref{thm:SLQ-oloop-kehua}, to prove that Problem (MF-SLQ)$^0$ is open-loop solvable at $(t,\xi)$, we need to verify that the family $\{u_\e(\cd)\}_{\e>0}$ is bounded in $L^2_\dbF(t,T;\dbR^m)$ with $u_\e(\cd)=\Th_\e(\cd)X_\e(\cd)+\bar\Th_\e(\cd)\dbE[X_\e(\cd)]$, where $X_\e(\cd)$ is the solution to the following:
\bel{Xe-0} \left\{\begin{aligned}
         dX_\e(s) &=\big\{AX_\e(s)+\bar A\dbE[X_\e(s)]+B\big\{\Th_\e X_\e(s)+\bar\Th_\e\dbE[X_\e(s)]\big\} \\
                 &\hp{=\ }\qq +\bar B\dbE\big\{\Th_\e X_\e(s)+\bar\Th_\e\dbE[X_\e(s)]\big\}\big\}ds\\
                  &\hp{=\ }+\big\{CX_\e(s)+\bar C\dbE[X_\e(s)]+D\big\{\Th_\e X_\e(s)+\bar\Th_\e\dbE[X_\e(s)]\big\} \\
                 &\hp{=\ }\qq +\bar D\dbE\big\{\Th_\e X_\e(s)+\bar\Th_\e\dbE[X_\e(s)]\big\}\big\}dW(s),\\
          X_\e(t) &=\xi.
      \end{aligned}\right.\ee
To this end, we return to Problem (MF-SLQ).
Let $v_\e(\cd)$ be defined by \rf{v-e} and denote by $X_\e^{t,\xi}(\cd)$ and $X_\e^{t,0}(\cd)$ the solutions to \rf{closed-loop-syst-e}
with respect to the initial pairs $(t,\xi)$ and $(t,0)$, respectively.
Note that Problem (MF-SLQ) is open-loop solvable and hence is open-loop solvable at both $(t,\xi)$ and $(t,0)$. By \autoref{thm:SLQ-oloop-kehua},
the families
$$ u_\e^{t,\xi}(\cd)\deq \Th_\e(\cd)X_\e^{t,\xi}(\cd)+\bar\Th_\e(\cd)\dbE[X^{t,\xi}_\e(\cd)]+ v_\e(\cd) \q\hb{and}\q u_\e^{t,0}(\cd) \deq \Th_\e(\cd) X_\e^{t,0}(\cd) + \bar\Th_\e(\cd)\dbE[X^{t,0}_\e(\cd)]+v_\e(\cd)$$
are bounded in $L^2_\dbF(t,T;\dbR^m)$. Because the process $v_\e(\cd)$ is independent of the initial state, the difference $X_\e^{t,\xi}(\cd)-X_\e^{t,0}(\cd)$ satisfies the same SDE as $X_\e(\cd)$.
By the uniqueness of solutions of SDEs, we must have
$X_\e(\cd)= X_\e^{t,\xi}(\cd)-X_\e^{t,0}(\cd).$
It follows that
\begin{align*}
u_\e(\cd)&=\Th_\e(\cd)X_\e(\cd)+\bar\Th_\e(\cd)\dbE[X_\e(\cd)]\\
&=\Th_\e(\cd)[X_\e^{t,\xi}(\cd)-X_\e^{t,0}(\cd)]+\bar\Th_\e(\cd)\{\dbE[X^{t,\xi}_\e(\cd)]-\dbE[X^{t,0}_\e(\cd)]\}\\
&=u_\e^{t,\xi}(\cd)-u_\e^{t,0}(\cd).
\end{align*}
Because $\{u_\e^{t,\xi}(\cd)\}_{\e>0}$ and $\{u_\e^{t,0}(\cd)\}_{\e>0}$
are bounded in $L^2_\dbF(t,T;\dbR^m)$, so is $\{u_\e(\cd)\}_{\e>0}$. By \autoref{thm:SLQ-oloop-kehua} again, Problem (MF-SLQ)$^0$ is open-loop solvable. \end{proof}

The second one will help us to work separately with $\Th_\e(\cd)$ and $\bar\Th_\e(\cd)$.

\begin{lemma}\label{lemma-Cauchy}
For any $0<t<T$ and $\e>0$, let $F_{\e}(\cd):[t,T]\to\dbR^{m\times n}$ be a square-integrable deterministic function.
Suppose that $\{F_\e(\cd)\xi\}_{\e>0}$ is Cauchy in $L_{\dbF}^2(t,T;\dbR^m)$ for any bounded $\cF_t$-measurable random vector $\xi$ with $\dbE[\xi]=0$.
Then  $\{F(\cd)\}_{\e>0}$ is Cauchy in $L^2(t,T;\dbR^{m\times n})$.
\end{lemma}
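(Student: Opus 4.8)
The plan is to exploit the fact that each $F_\e(\cd)$ is \emph{deterministic}, so that multiplying by a random vector $\xi$ only inserts a second moment of $\xi$ that factors out of the $L^2_\dbF$-norm. Concretely, for two indices $\e,\e'>0$ and any $\cF_t$-measurable $\xi$, writing $G\deq F_\e-F_{\e'}$ one has $|G(s)\xi|^2=\xi^\top G(s)^\top G(s)\xi$ pointwise in $s$, so that
$$\dbE\int_t^T|F_\e(s)\xi-F_{\e'}(s)\xi|^2ds=\int_t^T\tr\big(G(s)^\top G(s)\,\dbE[\xi\xi^\top]\big)\,ds.$$
Thus the hypothesis controls $\int_t^T\tr\big(G^\top G\,\dbE[\xi\xi^\top]\big)ds$ for every admissible $\xi$, and I want to pick $\xi$'s whose matrices $\dbE[\xi\xi^\top]$ reassemble $\int_t^T|G(s)|^2ds=\int_t^T\tr\big(G(s)^\top G(s)\big)ds$. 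Note that no use is made of $\dbE[\xi]=0$ in this identity; the mean-zero restriction in the hypothesis will simply be respected by the test vectors I choose.

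Next I would construct those test vectors. Fix a scalar, bounded, mean-zero, unit-variance, $\cF_t$-measurable random variable $\zeta$; since $t>0$ the $\sigma$-field $\cF_t$ is nontrivial and such a $\zeta$ exists — for instance $\zeta$ equal to the sign of $W(t)$, which is bounded, has $\dbE[\zeta]=0$ by symmetry, and has $\dbE[\zeta^2]=1$ because $W(t)\neq0$ almost surely. For each coordinate direction $i\in\{1,\dots,n\}$, set $\xi^{(i)}\deq\zeta e_i$, where $e_1,\dots,e_n$ is the standard basis of $\dbR^n$. Each $\xi^{(i)}$ is bounded, $\cF_t$-measurable and mean-zero, hence admissible. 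Since $(F_\e(s)-F_{\e'}(s))\xi^{(i)}=\zeta\,(F_\e(s)-F_{\e'}(s))e_i$ and $\dbE[\zeta^2]=1$, the previous identity collapses to
$$\dbE\int_t^T|F_\e(s)\xi^{(i)}-F_{\e'}(s)\xi^{(i)}|^2ds=\int_t^T|(F_\e(s)-F_{\e'}(s))e_i|^2ds,$$
so the Cauchy hypothesis applied to $\xi^{(i)}$ tells me that the $i$-th column of $F_\e(\cd)$ is Cauchy in $L^2(t,T;\dbR^m)$ as $\e,\e'\downarrow0$.

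Finally I would sum over $i$. Because the Frobenius norm decomposes by columns, $\sum_{i=1}^n|(F_\e(s)-F_{\e'}(s))e_i|^2=|F_\e(s)-F_{\e'}(s)|^2$ for every $s$, whence
$$\int_t^T|F_\e(s)-F_{\e'}(s)|^2ds=\sum_{i=1}^n\int_t^T|(F_\e(s)-F_{\e'}(s))e_i|^2ds,$$
which tends to $0$ as $\e,\e'\downarrow0$, being a finite sum of quantities each tending to $0$. This is exactly the assertion that $\{F_\e(\cd)\}_{\e>0}$ is Cauchy in $L^2(t,T;\dbR^{m\times n})$. I do not expect a genuine obstacle: the only point needing care is the existence of the scalar test variable $\zeta$, which is precisely where the hypothesis $t>0$ enters (at $t=0$ the filtration $\cF_0$ is trivial and no nonconstant mean-zero $\zeta$ exists); everything else is the elementary bookkeeping of factoring out $\dbE[\zeta^2]=1$ and reassembling the Frobenius norm from its columns.
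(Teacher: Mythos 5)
Your proof is correct and takes essentially the same route as the paper's: both reduce the problem to the individual columns of $F_\e$ by testing with a bounded, mean-zero, $\mathcal{F}_t$-measurable scalar random variable multiplied by each standard basis vector, factoring out the scalar's (positive) second moment to recover the deterministic column norms, and then summing over columns. The only cosmetic difference is the choice of scalar: you use $\mathrm{sign}(W(t))$, while the paper uses the indicator combination $I_{\Omega_1}-\tfrac{\mathbb{P}(\Omega_1)}{\mathbb{P}(\Omega_2)}I_{\Omega_2}$ for an $\mathcal{F}_t$-measurable set $\Omega_1$ with $0<\mathbb{P}(\Omega_1)<1$.
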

\begin{proof}
Let $\Om_1$ be an $\cF_t$-measurable set satisfying $0<\dbP(\Om_1)<1$. Then the set $\Om_2\deq\Om\backslash\Om_1$ also
satisfies $0<\dbP(\Om_2)<1$. Let $e_1\deq (1,0,...,0)^\top\in\dbR^m$ and define
$$
\xi_1=e_1I_{\Om_1}-{\dbP(\Om_1)\over\dbP(\Om_2)}e_1I_{\Om_2}.
$$
It is clear to see that $\xi_1$ is an $\cF_t$-measurable random vector and
$$\dbE[\xi_1]=e_1\dbE[I_{\Om_1}]-{\dbP(\Om_1)\over\dbP(\Om_2)}e_1\dbE[I_{\Om_2}]=e_1\dbP(\Om_1)-e_1\dbP(\Om_1)=0.$$
For any $\e_1,\e_2>0$,
\begin{align*}
&\dbE\int_t^T|F_{\e_1}(s)\xi_1-F_{\e_2}(s)\xi_1|^2ds\\
&\q=\dbE\int_t^T\Big|[f^1_{\e_1}(s)-f^1_{\e_2}(s)]I_{\Om_1}-{\dbP(\Om_1)\over\dbP(\Om_2)}[f^1_{\e_1}(s)-f^1_{\e_2}(s)]I_{\Om_2}\Big|^2ds\\
&\q=\dbE\int_t^T\Big[\big|f^1_{\e_1}(s)-f^1_{\e_2}(s)\big|^2I_{\Om_1}+{\dbP(\Om_1)^2\over\dbP(\Om_2)^2}\big|f^1_{\e_1}(s)-f^1_{\e_2}(s)\big|^2I_{\Om_2}\Big]ds\\
&\q=\big(\dbP(\Om_1)+{\dbP(\Om_1)^2\over\dbP(\Om_2)}\big) \int_t^T\big|f^1_{\e_1}(s)-f^1_{\e_2}(s)\big|^2ds,
\end{align*}
where $(f_{\e}^1(\cd),...,f_{\e}^n(\cd))=F_{\e}(\cd);\e>0$.
Since $\{F_\e(\cd)\xi_1\}_{\e>0}$ is Cauchy in $L_{\dbF}^2(t,T;\dbR^m)$ and $\dbP(\Om_1)+{\dbP(\Om_1)^2\over\dbP(\Om_2)}>0$, the above implies that  $\{f_\e^1(\cd)\}_{\e>0}$ is Cauchy in $L^2(t,T;\dbR^{m})$.
Similarly, one can prove that $\{f_\e^i(\cd)\}_{\e>0}$ is Cauchy in $L^2(t,T;\dbR^{m})$ for $i=2,...,n$.
Hence $\{F(\cd)\}_{\e>0}$ is Cauchy in $L^2(t,T;\dbR^{m\times n})$.
\end{proof}
We now prove that the family $\{\ti\Th_\e(\cd)\}_{\e>0}$ defined by \rf{def-ti-Th-Th-v-e} is locally convergent in $[0,T)$.

\begin{proposition}\label{prop:limit-ti-The} Let {\rm\ref{ass:A1}} and {\rm\ref{ass:A2}} hold. Suppose that Problem {\rm(MF-SLQ)$^0$} is open-loop solvable.
Then the family $\{\ti\Th_\e(\cd)\}_{\e>0}$ defined by \rf{def-ti-Th-Th-v-e} converges in $L^2(0,T';\dbR^{m\times n})$ for any $0<T'<T$; that is, there exists a locally square-integrable deterministic function $\ti\Th^*(\cdot):[0,T)\to\dbR^{m\times n}$ such that
$$ \lim_{\e\to 0}\int_0^{T'}|\ti\Th_\e(s)-\ti\Th^*(s)|^2ds=0, \q\forall\, 0<T'<T. $$
\end{proposition}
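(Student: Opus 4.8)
The plan is to reduce the convergence of the mean feedback gain $\ti\Th_\e(\cd)$ to the convergence of the outcomes $u_\e(\cd)$ already supplied by \autoref{thm:SLQ-oloop-kehua}, working entirely with the \emph{mean} of the state. The key observation is that, since $\Th_\e+\bar\Th_\e=\ti\Th_\e$ and the nonhomogeneous terms vanish for Problem (MF-SLQ)$^0$ (so that $v_\e\equiv0$), taking expectations in the closed-loop system \rf{closed-loop-syst-e} shows that $\dbE[X_\e(\cd)]$ obeys the \emph{autonomous} linear ODE
$$\frac{d}{ds}\dbE[X_\e(s)]=\big[(A+\bar A)+(B+\bar B)\ti\Th_\e\big]\dbE[X_\e(s)],\q \dbE[u_\e(s)]=\ti\Th_\e(s)\dbE[X_\e(s)],$$
which involves neither $\Th_\e(\cd)$ nor $v_\e(\cd)$. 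Thus $\ti\Th_\e$ is governed solely by the deterministic mean system, and this is exactly what lets us handle it separately and first. (In contrast, \autoref{lemma-Cauchy} is reserved for the fluctuation gain $\Th_\e$, where the initial datum is randomized with zero mean so that the mean part drops out.)

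First I would feed in the convergence of the outcomes. Since Problem (MF-SLQ)$^0$ is open-loop solvable, it is open-loop solvable at every pair $(\t,e_i)$ with $\t\in[0,T)$ and $i=1,\dots,n$; hence by \autoref{thm:SLQ-oloop-kehua} the families $\{u_\e^{\t,e_i}(\cd)\}_{\e>0}$ converge strongly in $L^2_\dbF(\t,T;\dbR^m)$, and taking expectations (using $|\dbE[\cd]|^2\les\dbE|\cd|^2$) the deterministic functions $\dbE[u_\e^{\t,e_i}(\cd)]$ converge in $L^2(\t,T;\dbR^m)$. Assembling the $n$ basis initial conditions at time $\t$ into $\Phi_\e^\t(s)=\big[\dbE[X_\e^{\t,e_1}(s)],\dots,\dbE[X_\e^{\t,e_n}(s)]\big]$ and $\Gamma_\e^\t(s)=\big[\dbE[u_\e^{\t,e_1}(s)],\dots,\dbE[u_\e^{\t,e_n}(s)]\big]$, one has $\Phi_\e^\t(\t)=I_n$, the identity $\ti\Th_\e\,\Phi_\e^\t=\Gamma_\e^\t$ on $[\t,T]$ (the same $\ti\Th_\e$ for every initial time), and the matrix ODE $\dot\Phi_\e^\t=(A+\bar A)\Phi_\e^\t+(B+\bar B)\Gamma_\e^\t$.

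The heart of the argument is a local inversion with a restart. Because $\{\Gamma_\e^\t\}_{\e>0}$ is bounded in $L^2(\t,T;\dbR^{m\times n})$, $A+\bar A\in L^1$ and $B+\bar B\in L^2$, the integral form of $\Phi_\e^\t$ yields a modulus of continuity at $s=\t$ that is uniform in $\e$; hence there is $\d>0$ with $|\Phi_\e^\t(s)-I_n|\les\frac12$, so $\|\Phi_\e^\t(s)^{-1}\|\les2$, for all $s\in[\t,\t+\d]$ and all $\e>0$. On this subinterval $\ti\Th_\e=\Gamma_\e^\t(\Phi_\e^\t)^{-1}$, and since $\Gamma_\e^\t$ converges in $L^2$ while $(\Phi_\e^\t)^{-1}$ converges uniformly to an invertible limit, $\ti\Th_\e$ converges in $L^2(\t,\t+\d;\dbR^{m\times n})$. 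I would then cover $[0,T']$ by finitely many such intervals, choosing restart times $0=\t_0<\t_1<\cds<\t_N\ges T'$ with $\t_{k+1}-\t_k\les\d$; each step gives convergence of $\ti\Th_\e$ on $[\t_k,\t_{k+1}]$, and the pieces agree because they all compute the single function $\ti\Th_\e$, producing a locally square-integrable limit $\ti\Th^*$ on $[0,T')$.

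The main obstacle is the uniform-in-$\e$ invertibility of the mean fundamental matrix: a priori $\ti\Th_\e$ could blow up as $\e\downarrow0$, letting $\Phi_\e^\t$ degenerate and making the division by $(\Phi_\e^\t)^{-1}$ illegitimate. The device that defeats this is the restart: by reinitializing at each $\t_k$ with the \emph{full} basis $e_1,\dots,e_n$, one has $\Phi_\e^{\t_k}(\t_k)=I_n$, so the matrix stays uniformly invertible on a short interval regardless of the global behavior of $\ti\Th_\e$. This is precisely why the conclusion is only local on $[0,T)$: the global flow $\Phi_\e^0$ may lose invertibility in the limit $\e\to0$ at the terminal time—as in \autoref{ex-1.1}, where the optimal mean state is steered to $0$ at $s=T$—so $T'$ cannot be pushed up to $T$. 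The one routine point still to be checked is that $\d$ may be taken bounded below as $\t$ ranges over $[0,T']$, which follows from a uniform bound on $\sup_{\e>0}\|\Gamma_\e^\t\|_{L^2(\t,T)}$ over $\t\in[0,T']$ obtained from the estimate \rf{bound-of-ue}.
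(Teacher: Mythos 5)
Your proposal follows essentially the same route as the paper's proof: you reduce to the mean dynamics (which, since $\Th_\e+\bar\Th_\e=\ti\Th_\e$ and $v_\e\equiv0$ for Problem (MF-SLQ)$^0$, involve only $\ti\Th_\e$), write $\ti\Th_\e=\Gamma_\e^\t(\Phi_\e^\t)^{-1}$ with $\Phi_\e^\t$ the mean fundamental matrix restarted at $\t$ with $\Phi_\e^\t(\t)=I_n$, obtain the $L^2$-convergence of $\Gamma_\e^\t=\ti\Th_\e\Phi_\e^\t$ from \autoref{thm:SLQ-oloop-kehua} by taking expectations of the convergent outcomes, exploit uniform invertibility of $\Phi_\e^\t$ on a short interval $[\t,\t+\d]$, and patch such intervals together to cover $[0,T']$. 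This is exactly the paper's argument (the paper writes $\ti U_\e=\ti\Th_\e\F_\e$ for your $\Gamma_\e^\t$). Your local step is correct, and your equicontinuity justification of the uniform invertibility (via the uniform $L^2$-bound on $\Gamma_\e^\t$ together with $A+\bar A\in L^1$, $B+\bar B\in L^2$) is a legitimate substitute for the paper's route, which instead uses uniform convergence of $\F_\e$ (ODE stability) plus continuity of the limit at the restart time.

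The one genuine gap is in your globalization. You insist on a step size $\d$ bounded below uniformly in $\t\in[0,T']$ and claim this ``routine point'' follows from \rf{bound-of-ue}. It does not, at least not directly: \rf{bound-of-ue} bounds $\sup_{\e>0}\dbE\int_\t^T|u_\e^{\t,e_i}(s)|^2ds$ by $\dbE\int_\t^T|v^*(s)|^2ds$, where $v^*$ is an open-loop optimal control of Problem (MF-SLQ)$^0$ at $(\t,e_i)$. To get a bound on $\sup_{\e>0}\|\Gamma_\e^\t\|_{L^2(\t,T)}$ that is uniform over $\t\in[0,T']$, you would therefore need the norms of (minimal-norm) optimal controls at $(\t,e_i)$ to be bounded uniformly in $\t$, and neither your argument nor the paper establishes this; in particular you cannot manufacture such a family by restricting a single optimal control for $(0,e_i)$, since its restriction to $[\t,T]$ is optimal for $(\t,X^*(\t))$, not for $(\t,e_i)$. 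The paper sidesteps this demand: it fixes, for each $t\in[0,T']$, a point-dependent $\D_t$, invokes compactness of $[0,T']$ to select finitely many intervals $[t_j,t_j+\D_{t_j}]$ covering $[0,T']$, and then sums the finitely many Cauchy estimates. If you replace your uniform-$\d$ marching by this covering, the rest of your proof goes through verbatim; as written, the final step rests on an unproved uniformity claim.
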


\begin{proof}
We need to show that for any $0<T'<T$, the family $\{\ti\Th_\e(\cd)\}_{\e>0}$ is Cauchy in $L^2(0,T';\dbR^{m\times n})$.
For any $(t,\xi)\in\cD$, let $X_\e(\cd)$ be the unique solution to the closed-loop system
      \bel{closed-loop-syst-0-e}\left\{\begin{aligned}
         dX_\e(s) &=\big\{AX_\e(s)+\bar A\dbE[X_\e(s)]+B\big\{\Th_\e X_\e(s)+\bar\Th_\e\dbE[X_\e(s)]\big\} \\
                 &\hp{=\ }\qq +\bar B\dbE\big\{\Th_\e X_\e(s)+\bar\Th_\e\dbE[X_\e(s)]\big\}\big\}ds\\
                  &\hp{=\ }+\big\{CX_\e(s)+\bar C\dbE[X_\e(s)]+D\big\{\Th_\e X_\e(s)+\bar\Th_\e\dbE[X_\e(s)]\big\} \\
                 &\hp{=\ }\qq +\bar D\dbE\big\{\Th_\e X_\e(s)+\bar\Th_\e\dbE[X_\e(s)]\big\}\big\}dW(s),\\
          X_\e(t) &=\xi.
      \end{aligned}\right.\ee
In light of $\bar\Th_\e(\cd)\equiv\ti\Th_\e(\cd)-\Th_\e(\cd)$, by taking expectation on the both sides of the above, we have
\bel{expect-syst-0-e}\left\{\begin{aligned}
         d\dbE[X_\e(s)] &=(A+\bar A+B\ti\Th_\e+\bar B\ti\Th_\e)\dbE[X_\e(s)]ds,\q s\in[t,T], \\
          \dbE[X_\e(t)] &=\dbE[\xi].
\end{aligned}\right.\ee
Let $\F_\e(\cd)\in C([t,T];\dbR^{n\times n})$ be the solution to the following ODE:
\bel{Phi-e}\left\{\begin{aligned}
   d\Phi_\e(s) &=(A+\bar A+B\ti\Th_\e+\bar B\ti\Th_\e)\Phi_\e(s)ds, \q s\in[t,T], \\
    \Phi_\e(t) &= I_n.
\end{aligned}\right.\ee
%
Then for any initial state $\xi$, the expectation of the solution to \rf{closed-loop-syst-0-e} (i.e., the solution to \rf{expect-syst-0-e})  is given by
$$ \dbE[X_\e(s)] = \F_\e(s)\dbE[\xi], \q s\in[t,T]. $$
Since Problem (MF-SLQ)$^0$ is open-loop solvable, by \autoref{thm:SLQ-oloop-kehua}, the family
\begin{align*}u_\e(s) &=\Th_\e(s)X_\e(s)+\bar\Th_\e(s)\dbE[X_\e(s)]\\
&=\Th_\e(s)\{X_\e(s)-\dbE[X_\e(s)]\}+\ti\Th_\e(s)\dbE[X_\e(s)],\q s\in[t,T]; \q \e>0
\end{align*}
is strongly convergent in $L^2_\dbF(t,T;\dbR^m)$ for any $\xi\in L^2_{\cF_t}(\Om;\dbR^n)$.
Hence the family of functions
$$\dbE[u_\e(s)] =\ti\Th_\e(s)\dbE[X_\e(s)]=\ti\Th_\e(s)\F_\e(s)\dbE[\xi],\q s\in[t,T]; \q \e>0$$
is strongly convergent in $L^2(t,T;\dbR^m)$ for any $\xi\in L^2_{\cF_t}(\Om;\dbR^n)$.
It follows that $\{\ti\Th_\e(\cd)\F_\e(\cd)\}_{\e>0}$
converges strongly in $L^2(t,T;\dbR^{m\times n})$ as $\e\to0$. Denote $\ti U_\e(\cd)=\ti\Th_\e(\cd)\F_\e(\cd)$ and let $\ti U^*(\cd)$ be the strong limit of $\ti U_\e(\cd)$.
By the stability results of ODE, the family of continuous functions $\F_\e(\cd)$  converges uniformly to the solution  of
$$\left\{\begin{aligned}
    d\Phi^*(s)  &= \big\{(A(s)+\bar A(s))\Phi^*(s) + (B(s)+\bar B(s))\ti U^*(s)\big\}ds, \q s\in[t,T], \\
       \Phi^*(t) &= I_n.
\end{aligned}\right.$$
By noting that $\Phi^*(t)=I_n$, there is a small constant $\D_t>0$ such that for any small $\e>0$,
\begin{enumerate}[(a)]
\item $\F_\e(s)$ is invertible for all $s\in[t,t+\D_t]$,  and
\item $|\F_\e(s)|\ges {1\over2}$ for all $s\in[t,t+\D_t]$.
\end{enumerate}
We claim that the family $\{\ti\Th_\e(\cd)\}_{\e>0}$ is Cauchy in $L^2(t,t+\D_t;\dbR^{m\times n})$. Indeed, by (a) and (b), we have
\begin{align*}
& \int_t^{t+\D_t} |\ti\Th_{\e_1}(s)-\ti\Th_{\e_2}(s)|^2 ds \\
&\q= \int_t^{t+\D_t} \lt|\ti U_{\e_1}(s)\Phi_{\e_1}(s)^{-1}-\ti U_{\e_2}(s)\Phi_{\e_2}(s)^{-1}\rt|^2ds \\
&\q\les 2\int_t^{t+\D_t} \big|\ti U_{\e_1}(s)-\ti U_{\e_2}(s)\big|^2  \big|\Phi_{\e_1}(s)^{-1}\big|^2 ds  +2\int_t^{t+\D_t} \big|\ti U_{\e_2}(s)\big|^2 \big|\Phi_{\e_1}(s)^{-1}-\Phi_{\e_2}(s)^{-1}\big|^2ds\\
&\q\les 2\int_t^{t+\D_t} \big|\ti U_{\e_1}(s)-\ti U_{\e_2}(s)\big|^2  \big|\Phi_{\e_1}(s)^{-1}\big|^2ds \\
&\q\hp{\les\ } +2\int_t^{t+\D_t} \big|\ti U_{\e_2}(s)\big|^2 \big|\Phi_{\e_1}(s)^{-1}\big|^2 \big|\Phi_{\e_2}(s)-\Phi_{\e_1}(s)\big|^2 \big|\Phi_{\e_2}(s)^{-1}\big|^2ds  \\
&\q\les 8\int_t^{t+\D_t} \big|\ti U_{\e_1}(s)-\ti U_{\e_2}(s)\big|^2 ds+32\int_t^{t+\D_t} \big|\ti U_{\e_2}(s)\big|^2 ds  \cd \sup_{t\les s\les t+\D_t}\big|\Phi_{\e_1}(s)-\Phi_{\e_2}(s)\big|^2.
\end{align*}
Since $\{\ti U_\e(\cd)\}_{\e>0}$ is Cauchy in $L^2_\dbF(t,T;\dbR^{m\times n})$ and $\{\F_\e(\cd)\}_{\e>0}$ converges uniformly on $[t,T]$,
the last two terms of the above inequality approach to zero as $\e_1,\e_2\to0$.

\ms

We next use a compactness argument to prove that $\{\ti\Th_\e(\cd)\}_{\e>0}$ is Cauchy in $L^2(0,T';\dbR^{m\times n})$
for any $0<T'<T$. From the preceding argument we see that for each $t\in[0,T']$, there exists a small $\D_t>0$ such that $\{\ti\Th_\e(\cd)\}_{\e>0}$ is Cauchy in $L^2(t,t+\D_t;\dbR^{m\times n})$. Since $[0,T']$ is compact, we can choose finitely many $t\in[0,T']$ and $\D_t$, say, $t_1,t_2,\ldots,t_k;\D_{t_1},\D_{t_2},\ldots,\D_{t_k}$, such that
$\{\ti\Th_\e(\cd)\}_{\e>0}$ is Cauchy in each $L^2(t_j,t_j+\D_{t_j};\dbR^{m\times n})$ and $[0,T']\subseteq\bigcup_{j=1}^k[t_j,t_j+\D_{t_j}]$. It follows that
$$ \int_0^{T^\prime} |\ti\Th_{\e_1}(s)-\ti\Th_{\e_2}(s)|^2 ds
\les \sum_{j=1}^k\int_{t_j}^{t_j+\D_{t_j}} |\ti\Th_{\e_1}(s)-\ti\Th_{\e_2}(s)|^2 ds \to0 \q\hb{as}\q \e_1,\e_2\to0. $$
The proof is therefore completed.
\end{proof}
The following result shows that $\{\Th_\e(\cd)\}_{\e>0}$ defined by \rf{Th-e} is locally convergent in $(0,T)$.

\begin{proposition}\label{prop:limit-The} Let {\rm\ref{ass:A1}} and {\rm\ref{ass:A2}} hold. Suppose that Problem {\rm(MF-SLQ)$^0$} is open-loop solvable.
Then the family $\{\Th_\e(\cd)\}_{\e>0}$ defined by \rf{Th-e} converges in $L^2(t,T^\prime;\dbR^{m\times n})$ for any $0<t<T'<T$; that is, there exists a locally square-integrable deterministic function $\Th^*(\cdot):(0,T)\to\dbR^{m\times n}$ such that
$$ \lim_{\e\to 0}\int_{t}^{T'}|\Th_\e(s)-\Th^*(s)|^2ds=0, \q\forall\, 0<t<T'<T. $$
\end{proposition}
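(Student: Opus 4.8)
The plan is to run the argument of \autoref{prop:limit-ti-The} in parallel, but isolating the \emph{fluctuation} feedback $\Th_\e$ instead of the mean feedback $\ti\Th_\e$. Since Problem (MF-SLQ)$^0$ is homogeneous, $v_\e(\cd)\equiv0$ and the outcome is $u_\e=\Th_\e\{X_\e-\dbE[X_\e]\}+\ti\Th_\e\dbE[X_\e]$. To expose only $\Th_\e$, I would restart the closed-loop system \rf{closed-loop-syst-0-e} at an interior time $\tau$ and drive it by a bounded, \emph{mean-zero} $\cF_\tau$-measurable $\xi$ with $\dbE[\xi\xi^\top]=I_n$; such a $\xi$ exists only because $\tau>0$ makes $\cF_\tau$ non-trivial, and this is exactly why the claim is restricted to $(t,T')$ with $t>0$, unlike the $(0,T')$ of \autoref{prop:limit-ti-The}. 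With $\dbE[\xi]=0$ the mean equation \rf{expect-syst-0-e} gives $\dbE[X_\e]\equiv0$, so the state obeys $dX_\e=(A+B\Th_\e)X_\e\,ds+(C+D\Th_\e)X_\e\,dW$, $X_\e(\tau)=\xi$, and $u_\e=\Th_\e X_\e$.

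Next I would manufacture a \emph{deterministic} surrogate for $\Th_\e$ that is a priori convergent. Writing $X_\e=\Psi_\e\xi$ with $\Psi_\e$ the matrix fundamental solution of the last SDE ($\Psi_\e(\tau)=I_n$), the process $\Psi_\e(s)$ is driven by the increments of $W$ after $\tau$ and is therefore independent of $\cF_\tau$, hence of $\xi$. Setting $M_\e\deq\dbE[\Psi_\e]$ and using this independence together with $\dbE[\xi\xi^\top]=I_n$, I get $V_\e(s)\deq\dbE[u_\e(s)\xi^\top]=\Th_\e(s)M_\e(s)$. By the open-loop solvability of Problem (MF-SLQ)$^0$ and \autoref{thm:SLQ-oloop-kehua}, $\{u_\e(\cd)\}_{\e>0}$ converges strongly, hence is Cauchy, in $L^2_\dbF(\tau,T;\dbR^m)$; Cauchy--Schwarz then makes $\{V_\e(\cd)\}_{\e>0}$ Cauchy in $L^2(\tau,T;\dbR^{m\times n})$.

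The decisive step is the uniform convergence of $M_\e$ near $\tau$. Here $M_\e$ solves the linear ODE $\dot M_\e=(A+B\Th_\e)M_\e=AM_\e+BV_\e$, $M_\e(\tau)=I_n$: the singular product $B\Th_\e M_\e$ is rewritten as the convergent forcing $BV_\e$, which is precisely the trick used for $\F_\e$ in \autoref{prop:limit-ti-The}. Since $A\in L^1$, $B\in L^2$ and $\{V_\e\}$ converges in $L^2$, continuous dependence for Carath\'eodory linear ODEs yields $M_\e\to M^*$ uniformly on $[\tau,T']$, with $M^*(\tau)=I_n$. Thus there is $\D_\tau>0$ such that, for all small $\e$, $M_\e(s)$ is invertible with $|M_\e(s)^{-1}|$ bounded on $[\tau,\tau+\D_\tau]$. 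Now $\Th_\e=V_\e M_\e^{-1}$ on this interval; either the splitting estimate of \autoref{prop:limit-ti-The} (using $M_{\e_1}^{-1}-M_{\e_2}^{-1}=M_{\e_1}^{-1}(M_{\e_2}-M_{\e_1})M_{\e_2}^{-1}$) shows $\{\Th_\e\}$ directly Cauchy in $L^2(\tau,\tau+\D_\tau;\dbR^{m\times n})$, or, equivalently, $\Th_\e\xi=V_\e M_\e^{-1}\xi$ is Cauchy in $L^2_\dbF$ for every bounded mean-zero $\cF_\tau$-measurable $\xi$ and \autoref{lemma-Cauchy} delivers the same conclusion. A finite-subcover argument over the compact set $[t,T']$ (the feedback $\Th_\e$ in \rf{Th-e} is independent of the initial data, so the \emph{same} function is controlled on each piece) then gives Cauchyness in $L^2(t,T';\dbR^{m\times n})$, and completeness yields the limit $\Th^*$ with the asserted local convergence.

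I expect the main obstacle to be the non-uniform boundedness of $\Th_\e$ as $\e\to0$ (reflecting that the limit $\Th^*$ is only locally square-integrable and may blow up near $T$): one cannot control the fundamental matrix of the controlled state equation directly, so the whole scheme rests on producing the a priori convergent deterministic surrogate $V_\e=\Th_\e M_\e$ and feeding it back as the forcing term of the ODE for $M_\e$. The secondary delicate point is the existence of a non-degenerate mean-zero $\cF_\tau$-measurable $\xi$, which is what forces the restriction $\tau\ge t>0$ and hence the interval $(t,T')$.
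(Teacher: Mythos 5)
Your proposal is correct and follows essentially the same route as the paper's proof: restart at an interior time with mean-zero initial data (so that $\dbE[X_\e]\equiv 0$ and $u_\e=\Th_\e X_\e$), represent $X_\e=\Phi_\e\xi$ via the fundamental solution, extract the deterministic surrogate $\Th_\e\dbE[\Phi_\e]$, identify $\dbE[\Phi_\e]$ as the solution of an ODE forced by that surrogate, invert $\dbE[\Phi_\e]$ near the initial time, and patch by compactness. The only variation is that you obtain Cauchyness of the surrogate by correlating $u_\e$ against a bounded mean-zero $\xi$ with $\dbE[\xi\xi^\top]=I_n$, whereas the paper conditions on $\cF_t$ (using independence of $\Phi_\e$ from $\cF_t$ and Jensen's inequality) and then invokes \autoref{lemma-Cauchy} --- a route you yourself note as an equivalent alternative.
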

\begin{proof}
To do that, we just need to show that for any $0<t<T'<T$, the family $\{\Th_\e(\cd)\}_{\e>0}$ is Cauchy in $L^2(t,T';\dbR^{m\times n})$.
For any $0<t<T$ and $\xi\in L^2_{\cF_t}(\Om;\dbR^n)$ with $\dbE[\xi]=0$, let $X_\e(\cd)$ be the unique solution to the closed-loop system
      \bel{closed-loop-syst-0-e-Th}\left\{\begin{aligned}
         dX_\e(s) &=\big\{AX_\e(s)+\bar A\dbE[X_\e(s)]+B\big\{\Th_\e X_\e(s)+\bar\Th_\e\dbE[X_\e(s)]\big\} \\
                 &\hp{=\ }\qq +\bar B\dbE\big\{\Th_\e X_\e(s)+\bar\Th_\e\dbE[X_\e(s)]\big\}\big\}ds\\
                  &\hp{=\ }+\big\{CX_\e(s)+\bar C\dbE[X_\e(s)]+D\big\{\Th_\e X_\e(s)+\bar\Th_\e\dbE[X_\e(s)]\big\} \\
                 &\hp{=\ }\qq +\bar D\dbE\big\{\Th_\e X_\e(s)+\bar\Th_\e\dbE[X_\e(s)]\big\}\big\}dW(s),\\
          X_\e(t) &=\xi.
      \end{aligned}\right.\ee
Taking expectation on the both sides of the above, we have
    \bel{closed-loop-syst-0-e-Th-expect}\left\{\begin{aligned}
         d\dbE[X_\e(s)] &=\big\{A+\bar A+B(\Th_\e +\bar\Th_\e)+\bar B(\Th_\e +\bar\Th_\e)\big\}\dbE[X_\e(s)]ds,\q s\in[t,T], \\
          \dbE[X_\e(t)] &=0,
      \end{aligned}\right.\ee
which implies that $\dbE[X_\e(s)]\equiv0;t\les s\les T$. Then the state equation \rf{closed-loop-syst-0-e-Th} can be rewritten as
      \bel{rw-syst-0-e-Th}\left\{\begin{aligned}
         dX_\e(s) &=\big\{AX_\e(s)+B\Th_\e X_\e(s)\big\}ds+\big\{CX_\e(s)+D\Th_\e X_\e(s)\big\}dW(s),\\
          X_\e(t) &=\xi.
      \end{aligned}\right.\ee
Let $\F_\e(\cd)\in L_\dbF^2(\Om;C([t,T];\dbR^{n\times n})$ be the solution to the following SDE:
\bel{Phi-e-Th}\left\{\begin{aligned}
   d\Phi_\e(s) &= (A+B\Th_\e)\Phi_\e(s)ds +(C+D\Th_\e)\Phi_\e(s)dW(s), \q s\in[t,T],\\
    \Phi_\e(t) &= I_n.
\end{aligned}\right.\ee
%
Clearly, for any initial state $\xi$, the solution of \rf{rw-syst-0-e-Th} (or \rf{closed-loop-syst-0-e-Th}) can be expressed by
$$ X_\e(s) = \F_\e(s)\xi, \q s\in[t,T]. $$
Since Problem (MF-SLQ)$^0$ is open-loop solvable, by \autoref{thm:SLQ-oloop-kehua}, the family
$$u_\e(s) =\Th_\e(s)X_\e(s) = \Th_\e(s)\F_\e(s)\xi,\q s\in[t,T]; \q \e>0$$
is strongly convergent in $L^2_\dbF(t,T;\dbR^m)$ for any $\xi\in L^2_{\cF_t}(\Om;\dbR^n)$ with $\dbE[\xi]=0$.
Denote $U_\e(\cd)=\Th_\e(\cd)\dbE[\F_\e(\cd)]$.  Note that $\F_\e(\cd)$ is independent of $\cF_t$, then for any $\e_1,\e_2>0$,
\begin{align}
&\nn\dbE\int_t^T\big|\Th_{\e_1}(s)\dbE[\Phi_{\e_1}(s)]\xi-\Th_{\e_2}(s)\dbE[\Phi_{\e_2}(s)]\xi\big|^2ds\nn\\
\nn&\q=\dbE\int_t^T\big|\Th_{\e_1}(s)\dbE_t[\Phi_{\e_1}(s)]\xi-\Th_{\e_2}(s)\dbE_t[\Phi_{\e_2}(s)]\xi\big|^2ds\\
\nn&\q=\dbE\int_t^T\big|\dbE_t[\Th_{\e_1}(s)\Phi_{\e_1}(s)\xi-\Th_{\e_2}(s)\Phi_{\e_2}(s)\xi]\big|^2ds\\
&\q\les\dbE\int_t^T\dbE_t\big[\big|\Th_{\e_1}(s)\Phi_{\e_1}(s)\xi-\Th_{\e_2}(s)\Phi_{\e_2}(s)\xi\big|^2\big]ds\nn\\
&\q=\dbE\int_t^T\big|\Th_{\e_1}(s)\Phi_{\e_1}(s)\xi-\Th_{\e_2}(s)\Phi_{\e_2}(s)\xi\big|^2ds.
\end{align}
It follows that $\{U_\e(\cd)\xi\}_{\e>0}=\{\Th_\e(\cd)\dbE[\Phi_\e(\cd)]\xi\}_{\e>0}$ is strongly convergent in $L^2_\dbF(t,T;\dbR^m)$ for any $\xi\in L^2_{\cF_t}(\Om;\dbR^n)$ with $\dbE[\xi]=0$.
By \autoref{lemma-Cauchy}, $\{U_\e(\cd)\}_{\e>0}\equiv\{\Th_\e(\cd)\dbE[\Phi_\e(\cd)]\}_{\e>0}$ converges strongly in $L^2(t,T;\dbR^{m\times n})$ as $\e\to0$.
Denote the limit of $U_\e(\cd)$ by $U^*(\cd)$. One sees that $\dbE[\F_\e(\cd)]$ satisfies the following ODE:
$$\left\{\begin{aligned}
    d\dbE[\Phi_\e(s)]  &= \{A(s)\dbE[\Phi_\e(s)] + B(s)U_\e(s)\}ds, \q s\in[t,T], \\
       \dbE[\Phi_\e(t)] &= I_n.
\end{aligned}\right.$$
Then the family of continuous functions $\dbE[\F_\e(\cd)]$  converges uniformly to the solution  of
$$\left\{\begin{aligned}
    d\dbE[\Phi^*(s)]  &= \{A(s)\dbE[\Phi^*(s)] + B(s)U^*(s)\}ds, \q s\in[t,T], \\
       \dbE[\Phi^*(t)] &= I_n.
\end{aligned}\right.$$
%
Thus by noting that $\dbE[\Phi^*(t)]=I_n$ we can choose a small constant $\D_t>0$ such that for small $\e>0$,
\begin{enumerate}[(a)]
\item $\dbE[\F_\e(s)]$ is invertible for all $s\in[t,t+\D_t]$,  and
\item $|\dbE[\F_\e(s)]|\ges {1\over2}$ for all $s\in[t,t+\D_t]$.
\end{enumerate}
Since $\{U_\e(\cd)\}_{\e>0}$ is Cauchy in $L^2_\dbF(t,T;\dbR^{m\times n})$ and $\{\dbE[\F_\e(\cd)]\}_{\e>0}$ converges uniformly on $[t,T]$, combining (a) and (b), we have
\begin{align*}
& \int_t^{t+\D_t} |\Th_{\e_1}(s)-\Th_{\e_2}(s)|^2 ds \\
&\q= \int_t^{t+\D_t} \lt|U_{\e_1}(s)\dbE[\Phi_{\e_1}(s)]^{-1}-U_{\e_2}(s)\dbE[\Phi_{\e_2}(s)]^{-1}\rt|^2ds \\
&\q\les 2\int_t^{t+\D_t} \big|U_{\e_1}(s)-U_{\e_2}(s)\big|^2  \big|\dbE[\Phi_{\e_1}(s)]^{-1}\big|^2 ds  +2\int_t^{t+\D_t} \big|U_{\e_2}(s)\big|^2 \big|\dbE[\Phi_{\e_1}(s)]^{-1}-\dbE[\Phi_{\e_2}(s)]^{-1}\big|^2ds\\
&\q\les 2\int_t^{t+\D_t} \big|U_{\e_1}(s)-U_{\e_2}(s)\big|^2  \big|\dbE[\Phi_{\e_1}(s)]^{-1}\big|^2ds \\
&\q\hp{\les\ } +2\int_t^{t+\D_t} \big|U_{\e_2}(s)\big|^2 \big|\dbE[\Phi_{\e_1}(s)^{-1}]\big|^2 \big|\dbE[\Phi_{\e_2}(s)]-\dbE[\Phi_{\e_1}(s)]\big|^2 \big|\dbE[\Phi_{\e_2}(s)^{-1}]\big|^2ds  \\
&\q\les 8\int_t^{t+\D_t} \big|U_{\e_1}(s)-U_{\e_2}(s)\big|^2 ds+32\int_t^{t+\D_t} \big|U_{\e_2}(s)\big|^2 ds  \cd \sup_{t\les s\les t+\D_t}\big|\dbE[\Phi_{\e_1}(s)]-\dbE[\Phi_{\e_2}(s)]\big|^2\\
&\q\to 0,\q\hbox{as}\q\e_1,\e_2\to0.
\end{align*}
This means that $\{\Th_\e(\cd)\}_{\e>0}$ is Cauchy in $L^2(t,t+\D_t;\dbR^{m\times n})$.
Similar to the last paragraph in the proof of \autoref{prop:limit-ti-The}, one can obtain that $\{\Th_\e(\cd)\}_{\e>0}$ is Cauchy in $L^2(t,T';\dbR^{m\times n})$
by the compactness argument.
\end{proof}

Combining \autoref{prop:limit-ti-The} with \autoref{prop:limit-The}, we have the following corollary, which shows that $\{\bar\Th_\e(\cd)\}_{\e>0}$ defined by \rf{bar-Th-e} is locally convergent in $(0,T)$.
\begin{corollary}\label{coroll:limit-bar-The}
Let {\rm\ref{ass:A1}} and {\rm\ref{ass:A2}} hold. Suppose that Problem {\rm(MF-SLQ)$^0$} is open-loop solvable.
Then the family $\{\bar\Th_\e(\cd)\}_{\e>0}$ defined by \rf{bar-Th-e} converges in $L^2(t,T^\prime;\dbR^{m\times n})$ for any $0<t<T'<T$; that is, there exists a locally square-integrable deterministic function $\bar\Th^*(\cdot):(0,T)\to\dbR^{m\times n}$ such that
$$ \lim_{\e\to 0}\int_{t}^{T'}|\bar\Th_\e(s)-\bar\Th^*(s)|^2ds=0, \q\forall\, 0<t<T'<T. $$
\end{corollary}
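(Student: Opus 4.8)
The plan is to obtain this as an immediate consequence of the two preceding results, exploiting that $\bar\Th_\e$ is by construction the difference $\ti\Th_\e-\Th_\e$ (see \rf{bar-Th-e}). First I would set $\bar\Th^*(\cd)\deq\ti\Th^*(\cd)-\Th^*(\cd)$ on $(0,T)$, where $\ti\Th^*$ is the locally square-integrable limit furnished by \autoref{prop:limit-ti-The} and $\Th^*$ is the one furnished by \autoref{prop:limit-The}. Since $\ti\Th^*$ is defined on all of $[0,T)$ while $\Th^*$ is defined only on $(0,T)$, the difference $\bar\Th^*$ is naturally defined (and locally square-integrable) on $(0,T)$, which matches the range asserted in the corollary.

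Next I would fix an arbitrary pair $0<t<T'<T$. Because $[t,T']\subseteq[0,T']$, the convergence $\ti\Th_\e\to\ti\Th^*$ in $L^2(0,T';\dbR^{m\times n})$ supplied by \autoref{prop:limit-ti-The} restricts to convergence in $L^2(t,T';\dbR^{m\times n})$, whereas \autoref{prop:limit-The} already yields $\Th_\e\to\Th^*$ in $L^2(t,T';\dbR^{m\times n})$ directly. Writing $\bar\Th_\e-\bar\Th^*=(\ti\Th_\e-\ti\Th^*)-(\Th_\e-\Th^*)$ and applying the elementary pointwise bound $|a-b|^2\les 2|a|^2+2|b|^2$ gives
$$\int_t^{T'}|\bar\Th_\e(s)-\bar\Th^*(s)|^2ds\les 2\int_t^{T'}|\ti\Th_\e(s)-\ti\Th^*(s)|^2ds+2\int_t^{T'}|\Th_\e(s)-\Th^*(s)|^2ds.$$
Both integrals on the right-hand side tend to $0$ as $\e\to0$ by the two propositions, so the left-hand side does as well, establishing the desired local convergence.

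There is essentially no genuine obstacle in this argument; the corollary is a bookkeeping combination of \autoref{prop:limit-ti-The} and \autoref{prop:limit-The}, and the only point deserving a moment's care is the compatibility of the two domains. Specifically, the restriction $t>0$ (rather than $t=0$) is inherited from \autoref{prop:limit-The}, whose proof forced the initial condition $\dbE[\xi]=0$ and relied on the invertibility of $\dbE[\Phi_\e(\cd)]$ only on a short interval starting from the initial time; consequently the best one can assert for $\bar\Th^*$ is convergence that is local in the open interval $(0,T)$, exactly as stated. No new estimates, auxiliary equations, or compactness arguments are needed beyond those already developed.
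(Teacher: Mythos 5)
Your proposal is correct and matches the paper's own argument: the paper derives this corollary precisely by "combining" \autoref{prop:limit-ti-The} and \autoref{prop:limit-The} via the identity $\bar\Th_\e=\ti\Th_\e-\Th_\e$ from \rf{bar-Th-e}, which is exactly your decomposition with $\bar\Th^*=\ti\Th^*-\Th^*$. Your additional remark about the domain compatibility (local convergence on $(0,T)$ rather than $[0,T)$ being forced by \autoref{prop:limit-The}) is a correct and accurate reading of why the corollary is stated with $t>0$.
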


The next result shows that the family $\{v_\e(\cd)\}_{\e>0}$ defined by \rf{v-e} is also locally convergent in $(0,T)$.
\begin{proposition}\label{prop:limit-ve}
Let {\rm\ref{ass:A1}} and {\rm\ref{ass:A2}} hold. Suppose that Problem {\rm(MF-SLQ)} is open-loop solvable. Then the family $\{v_\e(\cd)\}_{\e>0}$ defined by \rf{v-e} converges in $L_\dbF^2(t,T^\prime;\dbR^m)$
for any $0<t<T^\prime<T$; that is, there exists a locally square-integrable process
$v^*(\cdot):(0,T)\times\Om\to\dbR^m$ such that
$$ \lim_{\e\to0}\dbE\int_t^{T^\prime}|v_\e(s)-v^*(s)|^2ds=0, \q\forall\, 0<t<T^\prime<T. $$
\end{proposition}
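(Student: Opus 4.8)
The plan is to exploit the fact that $v_\e(\cd)$ does not depend on the initial state, so that it can be isolated by running the closed-loop system from the zero initial state. Fix $0<t<T$ and let $X_\e^{t,0}(\cd)$ be the solution of the closed-loop system \rf{closed-loop-syst-e} associated with the initial pair $(t,0)$. Since the outcome of $(\Th_\e(\cd),\bar\Th_\e(\cd),v_\e(\cd))$ at $(t,0)$ is, by \rf{u-e}, the process $u_\e^{t,0}(\cd)=\Th_\e(\cd)X_\e^{t,0}(\cd)+\bar\Th_\e(\cd)\dbE[X_\e^{t,0}(\cd)]+v_\e(\cd)$, one may simply solve for $v_\e(\cd)$:
$$ v_\e(\cd)=u_\e^{t,0}(\cd)-\Th_\e(\cd)X_\e^{t,0}(\cd)-\bar\Th_\e(\cd)\dbE[X_\e^{t,0}(\cd)]. $$
I would then prove that each of the three terms on the right converges on every subinterval $[t,T']$ with $T'<T$, and read off the limit of $v_\e(\cd)$.

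The second step is to show that $X_\e^{t,0}(\cd)$ converges, thereby breaking the apparent circularity (the state depends on $v_\e$). Since Problem (MF-SLQ) is open-loop solvable, \autoref{thm:SLQ-oloop-kehua} applied at $(t,0)$ gives that $u_\e^{t,0}(\cd)$ converges strongly in $L^2_\dbF(t,T;\dbR^m)$ to an open-loop optimal control $u^*(\cd)$ for $(t,0)$. Let $X^*(\cd)$ be the state process corresponding to $u^*(\cd)$ and $(t,0)$. The difference $X_\e^{t,0}(\cd)-X^*(\cd)$ solves the state equation \rf{state} with control $u_\e^{t,0}(\cd)-u^*(\cd)$, zero initial datum, and vanishing nonhomogeneous terms (the shared $b,\si$ cancel); hence the a priori estimate of \autoref{lmm:well-posedness-SDE} yields
$$ \dbE\Big[\sup_{t\les s\les T}|X_\e^{t,0}(s)-X^*(s)|^2\Big]\les K\,\dbE\int_t^T|u_\e^{t,0}(s)-u^*(s)|^2ds\to0, $$
so that $X_\e^{t,0}(\cd)\to X^*(\cd)$ strongly in $L^2_\dbF(\Om;C([t,T];\dbR^n))$, and in particular $\dbE[X_\e^{t,0}(\cd)]\to\dbE[X^*(\cd)]$ uniformly on $[t,T]$.

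The final step is to pass to the limit in the products. Because Problem (MF-SLQ) is open-loop solvable, so is Problem (MF-SLQ)$^0$ by \autoref{lmm:SLQ-SLQ0}, whence \autoref{prop:limit-The} and \autoref{coroll:limit-bar-The} furnish locally square-integrable limits with $\Th_\e(\cd)\to\Th^*(\cd)$ and $\bar\Th_\e(\cd)\to\bar\Th^*(\cd)$ in $L^2(t,T';\dbR^{m\times n})$ for every $T'<T$. Writing $\Th_\e X_\e^{t,0}-\Th^* X^*=\Th_\e(X_\e^{t,0}-X^*)+(\Th_\e-\Th^*)X^*$ and using that $\Th_\e(\cd)$ is deterministic, I would estimate
\begin{align*}
\dbE\int_t^{T'}|\Th_\e X_\e^{t,0}-\Th^* X^*|^2ds
&\les 2\Big(\int_t^{T'}|\Th_\e|^2ds\Big)\dbE\Big[\sup_{s}|X_\e^{t,0}-X^*|^2\Big]\\
&\hp{\les\ } +2\Big(\int_t^{T'}|\Th_\e-\Th^*|^2ds\Big)\dbE\Big[\sup_{s}|X^*|^2\Big],
\end{align*}
both terms tending to $0$ since $\int_t^{T'}|\Th_\e|^2ds$ is bounded (being convergent), $\dbE[\sup_s|X_\e^{t,0}-X^*|^2]\to0$, $\int_t^{T'}|\Th_\e-\Th^*|^2ds\to0$, and $\dbE[\sup_s|X^*|^2]<\i$. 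An identical, purely deterministic estimate gives $\bar\Th_\e\dbE[X_\e^{t,0}]\to\bar\Th^*\dbE[X^*]$ in $L^2(t,T')$. Combining the three limits yields $v_\e(\cd)\to u^*(\cd)-\Th^*(\cd)X^*(\cd)-\bar\Th^*(\cd)\dbE[X^*(\cd)]$ in $L^2_\dbF(t,T';\dbR^m)$, which is the asserted local limit $v^*(\cd)$.

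The main obstacle is precisely this product passage: the feedback gains $\Th_\e,\bar\Th_\e$ converge only in $L^2$ and only locally on $(0,T)$, whereas the states converge in the sup-norm topology; the splitting above reconciles the two by playing the sup-norm convergence of the states against the $L^2$-boundedness of the gains, and vice versa. This is also where the restriction to $0<t<T'<T$ is forced, since the convergence of $\Th_\e,\bar\Th_\e$ is not available up to the endpoints $0$ and $T$. I note that this route deliberately avoids splitting $v_\e=\varphi_\e-\dbE[\varphi_\e]+\bar\varphi_\e$ into mean and fluctuation parts, which would otherwise require inverting a second-kind Volterra relation arising from the variation-of-constants representation of $\dbE[X_\e^{t,0}(\cd)]$.
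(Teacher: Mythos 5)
Your proposal is correct and follows essentially the same route as the paper's proof: both isolate $v_\e(\cd)=u_\e(\cd)-\Th_\e(\cd)X_\e(\cd)-\bar\Th_\e(\cd)\dbE[X_\e(\cd)]$ along a fixed initial pair, invoke \autoref{thm:SLQ-oloop-kehua} for the convergence of $u_\e(\cd)$, \autoref{lmm:well-posedness-SDE} for the sup-norm convergence of the states, \autoref{lmm:SLQ-SLQ0} together with \autoref{prop:limit-The} and \autoref{coroll:limit-bar-The} for the gains, and the same product-splitting estimate pairing $L^2$-convergence of the deterministic gains against sup-norm convergence of the states. The only cosmetic differences are that you take $\xi=0$ and argue with explicit limits, whereas the paper takes an arbitrary fixed $\xi$ and argues via Cauchy differences — these are equivalent by completeness of $L^2_\dbF(t,T';\dbR^m)$.
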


\begin{proof}
Let $X_\e(s)$; $0<t\les s\les T$ be the solution to the closed-loop system \rf{closed-loop-syst-e} with respect to any fixed initial pair $(t,\xi)$.
Since Problem (MF-SLQ) is open-loop solvable, by \autoref{thm:SLQ-oloop-kehua}, the family
$$ u_\e(s) = \Th_\e(s)X_\e(s) +\bar\Th_\e(s)\dbE[X_{\e}(s)]+ v_\e(s), \q s\in[t,T]; \q\e>0 $$
is Cauchy in $L^2_\dbF(t,T;\dbR^m)$. In other words,
$$\dbE\int_t^T|u_{\e_1}(s)-u_{\e_2}(s)|^2ds \to 0  \q\hb{as}\q  \e_1,\e_2 \to0. $$
By \autoref{lmm:well-posedness-SDE}, the above implies that
\bel{X1-X2=go0} \dbE\lt[\sup_{t\les s\les T}|X_{\e_1}(s)-X_{\e_2}(s)|^2\rt]
\les K\dbE\int^T_t|u_{\e_1}(s)-u_{\e_2}(s)|^2ds \to0  \q\hb{as}\q  \e_1,\e_2 \to0. \ee
Now take any $0<t<T^\prime<T$.
Since Problem (MF-SLQ) is open-loop solvable, by \autoref{lmm:SLQ-SLQ0},
Problem (MF-SLQ)$^0$  is open-loop solvable.
Then according to \autoref{prop:limit-The} and \autoref{coroll:limit-bar-The}, the families $\{\Th_\e(\cd)\}_{\e>0}$ and $\{\bar\Th_\e(\cd)\}_{\e>0}$ are both Cauchy in $L^2(t,T^\prime;\dbR^{m\times n})$.
Thus, making use of \rf{X1-X2=go0}, we obtain
\begin{align*}
& \dbE\int_t^{T^\prime}|\Th_{\e_1}(s)X_{\e_1}(s)-\Th_{\e_2}(s)X_{\e_2}(s)|^2ds \\
&\q\les 2\dbE\int_t^{T^\prime}|\Th_{\e_1}(s)-\Th_{\e_2}(s)|^2|X_{\e_1}(s)|^2ds
        + 2\dbE\int_t^{T^\prime}|\Th_{\e_2}(s)|^2|X_{\e_1}(s)-X_{\e_2}(s)|^2 ds \\
&\q\les 2\int_t^{T^\prime}|\Th_{\e_1}(s)-\Th_{\e_2}(s)|^2ds\cd \dbE\left[ \sup_{t\les s\les T^\prime}|X_{\e_1}(s)|^2\right] \\
&\q\hp{\les\ } + 2\int_t^{T^\prime}|\Th_{\e_2}(s)|^2ds\cd\dbE\left[\sup_{t\les s\les T^\prime}|X_{\e_1}(s)-X_{\e_2}(s)|^2\right] \\
&\q\to 0  \q\hb{as}\q  \e_1,\e_2 \to0,
\end{align*}
and
\begin{align*}
& \dbE\int_t^{T^\prime}\big|\bar\Th_{\e_1}(s)\dbE[X_{\e_1}(s)]-\bar\Th_{\e_2}(s)\dbE[X_{\e_2}(s)]\big|^2ds \\
&\q\les 2\int_t^{T^\prime}|\bar\Th_{\e_1}(s)-\bar\Th_{\e_2}(s)|^2\dbE[|X_{\e_1}(s)|^2]ds
        + 2\int_t^{T^\prime}|\bar\Th_{\e_2}(s)|^2\dbE[|X_{\e_1}(s)-X_{\e_2}(s)|^2] ds \\
&\q\les 2\int_t^{T^\prime}|\bar\Th_{\e_1}(s)-\bar\Th_{\e_2}(s)|^2ds\cd \sup_{t\les s\les T^\prime}\dbE\left[ |X_{\e_1}(s)|^2\right] \\
&\q\hp{\les\ } + 2\int_t^{T^\prime}|\bar\Th_{\e_2}(s)|^2ds\cd\sup_{t\les s\les T^\prime}\dbE\left[|X_{\e_1}(s)-X_{\e_2}(s)|^2\right] \\
&\q\to 0  \q\hb{as}\q  \e_1,\e_2 \to0.
\end{align*}
Hence,
\begin{align*}
& \dbE\int_t^{T^\prime}|v_{\e_1}(s)-v_{\e_2}(s)|^2ds \\
&\q=\dbE\int_t^{T^\prime}\big|\{u_{\e_1}(s)-\Th_{\e_1}(s)X_{\e_1}(s)-\bar\Th_{\e_1}(s)\dbE[X_{\e_1}(s)]\}\\
&\qq\qq\qq-\{u_{\e_2}(s)-\Th_{\e_2}(s)X_{\e_2}(s)-\bar\Th_{\e_2}(s)\dbE[X_{\e_1}(s)]\}\big|^2ds \\
&\q\les 3\dbE\int_t^{T^\prime}|u_{\e_1}(s)-u_{\e_2}(s)|^2 + 3\dbE\int_t^{T^\prime}|\Th_{\e_1}(s)X_{\e_1}(s)-\Th_{\e_2}(s)X_{\e_2}(s)|^2ds \\
&\qq +3\dbE\int_t^{T^\prime}\big|\bar\Th_{\e_1}(s)\dbE[X_{\e_1}(s)]-\bar\Th_{\e_2}(s)\dbE[X_{\e_2}(s)]\big|^2ds \\
&\q\to 0  \q\hb{as}\q  \e_1,\e_2 \to0.
\end{align*}
This shows that the family $\{v_\e(\cd)\}_{\e>0}$ converges in $L_\dbF^2(t,T^\prime;\dbR^m)$.
\end{proof}

Now we present the main result of this section, which establishes the equivalence between open-loop and weak closed-loop solvabilities of Problem (MF-SLQ).

\begin{theorem}\label{thm:open=weak-closed}
Let {\rm\ref{ass:A1} and \ref{ass:A2}} hold. If Problem {\rm(MF-SLQ)} is open-loop solvable, then the limit triple $(\Th^*(\cd),\bar\Th^*(\cd),v^*(\cd))$ obtained in Propositions \ref{prop:limit-The}, \ref{prop:limit-ve} and Corollary \ref{coroll:limit-bar-The} is a weak closed-loop optimal strategy of Problem {\rm(MF-SLQ)} on any $(t,T)$. Consequently, the open-loop and weak closed-loop solvabilities of Problem {\rm(MF-SLQ)} are equivalent.
\end{theorem}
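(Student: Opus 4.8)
\emph{Proof of \autoref{thm:open=weak-closed}.}
The plan is to show that the limit triple is the state-feedback realization of the open-loop optimal control, after which the characterization \rf{weak-closed-optimal1*} delivers optimality at once. Fix $t\in[0,T)$ and an arbitrary $\xi\in L^2_{\cF_t}(\Om;\dbR^n)$. Since Problem (MF-SLQ) is open-loop solvable, it is open-loop solvable at $(t,\xi)$, so by \autoref{thm:SLQ-oloop-kehua} the outcomes $u_\e(\cd)=\Th_\e(\cd)X_\e(\cd)+\bar\Th_\e(\cd)\dbE[X_\e(\cd)]+v_\e(\cd)$ defined in \rf{u-e} converge strongly in $L^2_\dbF(t,T;\dbR^m)$ to an open-loop optimal control $u^*(\cd)$ for $(t,\xi)$. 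Regarding $X_\e$ and $X^*$ as the solutions of \rf{state} under $u_\e$ and $u^*$, their difference solves \rf{state} with zero initial datum and vanishing nonhomogeneous terms, so \autoref{lmm:well-posedness-SDE} gives
$$\dbE\Big[\sup_{t\les s\les T}|X_\e(s)-X^*(s)|^2\Big]\les K\,\dbE\int_t^T|u_\e(s)-u^*(s)|^2ds\to0;$$
hence $X_\e\to X^*$ in $L^2_\dbF(\Om;C([t,T];\dbR^n))$ and $\dbE[X_\e]\to\dbE[X^*]$ uniformly on $[t,T]$.

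Next I would pass to the limit in the feedback relation on interior subintervals. Fix $0<t<t'<T'<T$. Splitting $\Th_\e X_\e-\Th^*X^*=\Th_\e(X_\e-X^*)+(\Th_\e-\Th^*)X^*$ (and treating the mean-field term and $v_\e$ likewise) and using the local convergences from \autoref{prop:limit-The}, \autoref{coroll:limit-bar-The} and \autoref{prop:limit-ve} together with the uniform bounds $\dbE[\sup|X_\e|^2]$, $\dbE[\sup|X^*|^2]<\i$ (each factor $\int_{t'}^{T'}|\Th_\e|^2ds$ stays bounded while $\dbE[\sup|X_\e-X^*|^2]\to0$), one obtains $\Th_\e X_\e+\bar\Th_\e\dbE[X_\e]+v_\e\to\Th^*X^*+\bar\Th^*\dbE[X^*]+v^*$ in $L^2_\dbF(t',T';\dbR^m)$. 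As this sequence also converges to $u^*$ in $L^2_\dbF(t,T;\dbR^m)$, uniqueness of limits forces the feedback representation
$$u^*(s)=\Th^*(s)X^*(s)+\bar\Th^*(s)\dbE[X^*(s)]+v^*(s),\q\ae~s\in(t,T),~\as$$
Consequently $X^*$, being the solution of \rf{state} under $u^*$, solves the weak closed-loop system \rf{weak-syst} under $(\Th^*,\bar\Th^*,v^*)$, and its outcome is exactly $u^*\in\cU[t,T]$.

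To legitimately call $(\Th^*,\bar\Th^*,v^*)$ a weak closed-loop strategy I must confirm that \rf{weak-syst} is well-posed even though the gains are only locally square-integrable on $(t,T)$. The device is an exhaustion argument. For $t>0$ and any $T'<T$, \autoref{prop:limit-The} and \autoref{coroll:limit-bar-The} give $\Th^*,\bar\Th^*\in L^2(t,T';\dbR^{m\times n})$, so by \ref{ass:A1} and Cauchy--Schwarz the drift and diffusion coefficients of \rf{weak-syst} lie in $L^1(t,T')$ and $L^2(t,T')$ respectively; hence \rf{weak-syst} restricted to $[t,T']$ is a standard linear MF-SDE with a unique solution, to which the continuous process $X^*$ restricts. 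Letting $T'\uparrow T$ and using the continuity of $X^*$ identifies it as the unique solution on $[t,T]$, whose outcome $u^*$ lies in $\cU[t,T]$; as $\xi$ was arbitrary, $(\Th^*,\bar\Th^*,v^*)\in\cQ_w[t,T]$. I expect this endpoint analysis---controlling the solution as $s\uparrow T$, where the gains may blow up (cf. \autoref{ex-1.1})---to be the main difficulty; the remaining steps are limit passages already prepared by the preceding results.

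Finally, optimality and the equivalence follow quickly. Because $u^*$ is open-loop optimal for $(t,\xi)$, we have $J(t,\xi;\Th^*X^*+\bar\Th^*\dbE[X^*]+v^*)=J(t,\xi;u^*)=V(t,\xi)\les J(t,\xi;u)$ for every $u\in\cU[t,T]$, which by \rf{weak-closed-optimal1*} is precisely the statement that $(\Th^*,\bar\Th^*,v^*)$ is weakly closed-loop optimal on $(t,T)$. Since $t\in[0,T)$ is arbitrary, Problem (MF-SLQ) is weakly closed-loop solvable. Conversely, if a weak closed-loop optimal strategy exists on $(t,T)$, then by definition its outcome belongs to $\cU[t,T]$ and, by \rf{weak-closed-optimal1*}, minimizes $J(t,\xi;\cd)$, hence is an open-loop optimal control; therefore weak closed-loop solvability implies open-loop solvability, and the two notions are equivalent. $\hfill\qed$
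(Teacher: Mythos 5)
Your proposal is correct and follows essentially the same route as the paper: strong convergence $u_\e\to u^*$ via \autoref{thm:SLQ-oloop-kehua}, the state estimate from \autoref{lmm:well-posedness-SDE}, the local limit passage on $(t',T')$ using \autoref{prop:limit-The}, \autoref{coroll:limit-bar-The} and \autoref{prop:limit-ve} to obtain the feedback representation $u^*=\Th^*X^*+\bar\Th^*\dbE[X^*]+v^*$, and the equivalence \rf{weak-closed-optimal1*} for optimality, with the converse read off from the definition. Your added exhaustion argument for well-posedness of the weak closed-loop system is a detail the paper leaves implicit, not a different method.
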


\begin{proof}
For any initial pair $(t,\xi)\in\cD$,  let $\{u_\e(s);t\les s\les T\}_{\e>0}$
be the family defined by \rf{u-e}.  Since Problem (MF-SLQ) is open-loop solvable at $(t,\xi)$,
\autoref{thm:SLQ-oloop-kehua} implies that $\{u_\e(s);t\les s\les T\}_{\e>0}$ converges strongly to an open-loop
optimal control $\{u^*(s);t\les s\les T\}$ of Problem (MF-SLQ) (for the initial pair $(t,\xi)$).
Let $\{X^*(s);t\les s\les T\}$ be the solution to
$$\left\{\begin{aligned}
   dX^*(s) &=\big\{A(s)X^*(s)+\bar A(s)\dbE[X^*(s)]+ B(s)u^*(s)+\bar B(s)\dbE[u^*(s)]+ b(s)\big\}ds\\
         &\hp{=\ } +\big\{C(s)X^*(s)+\bar C(s)\dbE[X^*(s)]+ D(s)u^*(s)+\bar D(s)\dbE[u^*(s)]+ \si(s)\big\}dW(s),\q s\in[t,T],\\
     X^*(t)&= \xi,
\end{aligned}\right.$$
then $X^*(\cd)$ is the optimal state process.
If we can show that
\bel{18-6-1}u^*(s)=\Th^*(s)X^*(s)+\bar\Th^*(s)\dbE[X^*(s)]+v^*(s), \q t\les s<T,\ee
then $(\Th^*(\cd),\bar\Th^*(\cd),v^*(\cd))$ is clearly a weak closed-loop optimal strategy of Problem {\rm(MF-SLQ)} on $(t,T)$.
To do this, we note that by \autoref{lmm:well-posedness-SDE},
$$\dbE\lt[\sup_{t\les s\les T}|X_\e(s)-X^*(s)|^2\rt] \les K\dbE\int^T_t|u_\e(s)-u^*(s)|^2ds \to0  \q\hb{as}\q  \e\to0,$$
where $\{X_\e(s);t\les s\les T\}$ is the solution to equation \rf{closed-loop-syst-e}.
Further, by  \autoref{prop:limit-The}, \autoref{coroll:limit-bar-The} and \autoref{prop:limit-ve},
\begin{align*}
&\lim_{\e\to 0}\int_{t^\prime}^{T^\prime}|\Th_\e(s)-\Th^*(s)|^2ds=0, \q\forall\, 0<t^\prime<T^\prime<T, \\
&\lim_{\e\to 0}\int_{t^\prime}^{T^\prime}|\bar\Th_\e(s)-\bar\Th^*(s)|^2ds=0, \q\forall\, 0<t^\prime<T^\prime<T, \\
&\lim_{\e\to 0}\dbE\int_{t^\prime}^{T^\prime}|v_\e(s)-v^*(s)|^2ds=0, \q\forall\, 0<t^\prime<T^\prime<T.
\end{align*}
It follows that for any $0\les t<t^\prime<T^\prime<T$,
\begin{align*}
&\dbE\int_{t^\prime}^{T^\prime} \big|[\Th_\e(s)X_\e(s)+\bar\Th_\e(s)\dbE[X_\e(s)]+v_\e(s)]-[\Th^*(s)X^*(s)+\bar\Th^*(s)\dbE[X^*(s)]+v^*(s)]\big|^2ds \\
&\q\les 3\dbE\int_{t^\prime}^{T^\prime} |\Th_\e(s)X_\e(s)-\Th^*(s)X^*(s)|^2ds+3\dbE\int_{t^\prime}^{T^\prime} \big|\bar\Th_\e(s)\dbE[X_\e(s)]-\bar\Th^*(s)\dbE[X^*(s)]\big|^2ds  \\
&\qq\q+3\dbE\int_{t^\prime}^{T^\prime} |v_\e(s)-v^*(s)|^2ds \\
&\q\les 3\dbE\int_{t^\prime}^{T^\prime} |v_\e(s)-v^*(s)|^2ds +6\dbE\int_{t^\prime}^{T^\prime} |\Th_\e(s)|^2|X_\e(s)-X^*(s)|^2ds \\
&\q\hp{\les\ } + 6\dbE\int_{t^\prime}^{T^\prime} |\Th_\e(s)-\Th^*(s)|^2|X^*(s)|^2ds+6\dbE\int_{t^\prime}^{T^\prime} |\bar\Th_\e(s)|^2\big|\dbE[X_\e(s)]-\dbE[X^*(s)]\big|^2ds \\
&\q\hp{\les\ }+ 6\int_{t^\prime}^{T^\prime} |\bar\Th_\e(s)-\bar\Th^*(s)|^2|\dbE[X^*(s)]|^2ds\\
&\q\les 3\dbE\int_{t^\prime}^{T'} |v_\e(s)-v^*(s)|^2ds
        +6\int_{t^\prime}^{T'}|\Th_\e(s)|^2ds \cd\dbE\[\sup_{t\les s\les T}|X_\e(s)-X^*(s)|^2\] \\
&\q\hp{\les\ } +6\int_{t^\prime}^{T^\prime} |\Th_\e(s)-\Th^*(s)|^2ds \cd\dbE\[\sup_{t\les s\les T}|X^*(s)|^2\]\\
&\q\hp{\les\ }+6\int_{t^\prime}^{T'}|\bar\Th_\e(s)|^2ds \cd\[\sup_{t\les s\les T}|\dbE[X_\e(s)]-\dbE[X^*(s)|^2\] \\
&\q\hp{\les\ } +6\int_{t^\prime}^{T^\prime} |\bar\Th_\e(s)-\bar\Th^*(s)|^2ds \cd\sup_{t\les s\les T}\dbE\[|X^*(s)|^2\]\\
&\q\to 0 \q\hb{as}\q \e\to0.
\end{align*}
Recall that $u_\e(s)=\Th_\e(s)X_\e(s)+\bar\Th_\e(s)\dbE[X_\e(s)]+v_\e(s);t\les s\les T$ converges strongly to $u^*(s);t\les s\les T$ in $L^2_\dbF(t,T;\dbR^m)$ as $\e\to0$. Then \rf{18-6-1} must hold and hence $(\Th^*(\cd),\bar\Th^*(\cd),v^*(\cd))$ is a weak closed-loop optimal strategy.
The above argument shows that the open-loop solvability implies the weak closed-loop solvability.
The reverse implication is obvious by \autoref{def-wcloop}.
\end{proof}

\section{An Example}\label{Sec:Example}
In this section, we present an example to  illustrate the result
we obtained. In the example, the LQ problem is open-loop solvable
(and hence weakly closed-loop solvable) but not closed-loop solvable.
Using the method introduced in \autoref{thm:open=weak-closed}, we  find a weak closed-loop optimal strategy.
\begin{example}\label{ex-5.1}\rm
Consider the following Problem (MF-SLQ) with one-dimensional state equation
$$\left\{\begin{aligned}
   dX(s) &=\big\{-X(s)+\dbE[X(s)]+u(s)+\dbE[u(s)]\big\}ds\\
   & \q+\big\{ \sqrt{2}X(s)-\sqrt{2}\dbE[X(s)]\big\}dW(s), \q s\in[t,1],\\
    X(t) &= \xi,
\end{aligned}\right.$$
and cost functional
$$ J(t,\xi;u(\cd))=\dbE |X(1)|^2+|\dbE[X(s)]|^2. $$

We first claim that this LQ problem is not closed-loop solvable on any $[t,1]$.
Indeed, the generalized Riccati equation associated with this problem reads
$$\left\{\begin{aligned}
   \dot P(s) &=P(s)0^\dag P(s)=0, \q s\in[t,1],\\
   \dot \Pi(s) &=\Pi(s)0^\dag \Pi(s)=0, \q s\in[t,1],\\
        P(1) &=1,\q \Pi(1)=2,
\end{aligned}\right.$$
whose solution is, obviously, $(P(s),\Pi(s))\equiv (1,2)$. For any $s\in[t,1]$, we have
\begin{align*}
   &\sR\big(B(s)^\top P(s)+D(s)^\top P(s)C(s)+S(s)\big)=\sR(1)=\dbR, \\
   &\sR\big(R(s)+D(s)^\top P(s)D(s)\big)=\sR(0)=\{0\};\\
    &\sR\big((B(s)+\bar B(s))^\top \Pi(s)+(D(s)+\bar D(s))^\top P(s)(C(s)+\bar C(s))+S(s)+\bar S(s)\big)=\sR(4)=\dbR, \\
   &\sR\big(R(s)+\bar R(s)+(D(s)+\bar D(s))^\top P(s)(D(s)+\bar D(s))\big)=\sR(0)=\{0\},
\end{align*}
where $\sR(M)$ denotes the range of a matrix $M$.
By \cite[Definition 2.8]{Li-Sun-Yong 2016}, $(P(\cd),\Pi(\cd))$ is not a regular solution.
Our claim then follows from \cite[Theorem 4.1]{Li-Sun-Yong 2016}.

\ms

Next we use \autoref{thm:SLQ-oloop-kehua} to conclude that the above LQ problem is open-loop solvable
and hence, by \autoref{thm:open=weak-closed}, weakly closed-loop solvable.
Without loss of generality, we consider only the open-loop solvability at $t=0$.
To this end, let $\e>0$ be arbitrary and consider the Riccati equation \rf{Ric-e}, which, in our example, read:
\bel{ex-Pe}\left\{\begin{aligned}
   \dot P_\e(s) &={P_\e(s)^2\over\e}, \q s\in[0,1],\\
    \dot \Pi_\e(s) &={4\Pi_\e(s)^2\over\e}, \q s\in[0,1],\\
        P_\e(1) &=1,\q \Pi_\e(1)=2.
\end{aligned}\right.\ee
Solving \rf{ex-Pe} by separating variables, we get
$$ P_\e(s)={\e\over \e+1-s},\q\Pi_\e(s)={2\e\over \e+8-8s} \q s\in[0,1]. $$
Let
\begin{align}
\Th_\e &\deq-(R+\e I_m+D^\top P_\e D)^{-1}(B^\top P_\e+D^\top P_\e C+S) \nn\\
       &=-{P_\e\over\e} =-{1\over \e+1-s},\qq s\in[0,1],\nn\\
\ti\Th_\e &\deq-\big(R+\bar R+\e I_m+(D+\bar D)^\top P_\e (D+\bar D)\big)^{-1}\big((B+\bar B)^\top \Pi_\e+(D+\bar D)^\top P_\e (C+\bar C)+S+\bar S\big) \nn\\
  \nn     &=-{2\Pi_\e\over\e} =-{4\over \e+8-8s},\qq s\in[0,1],\\
\label{Th_e}\bar\Th_\e&\deq\ti\Th_\e-\Th_\e,\q v_\e=0.
\end{align}
Then the corresponding closed-loop system \rf{closed-loop-syst-e} can be written as
$$\left\{\begin{aligned}
   dX_\e(s) &= \big\{[\Th_\e(s)-1]X_\e(s)+[1-\Th_\e(s)+2\ti\Th_\e(s)]\dbE[X_\e(s)]\big\}ds  \\
            &\hp{=\ } +\big\{\sqrt{2}X_\e(s)-\sqrt{2}\dbE[X_e(s)]\big\}dW(s), \q s\in[0,1],\\
    X_\e(0) &=\xi.
\end{aligned}\right.$$
It follows that
$$\left\{\begin{aligned}
   d\dbE[X_\e(s)] &= 2\ti\Th_\e(s)\dbE[X_\e(s)]ds  \\
    \dbE[X_\e(0)] &=\dbE[\xi].
\end{aligned}\right.$$
By the variation of constants formula for ODEs, we have
\bel{ex-E-X}
\dbE[X_\e(s)]={\e+8-8s\over\e+8}\dbE[\xi],\q s\in[0,1].
\ee
Applying the variation of constants formula for SDEs, we then get
\begin{align*}
X_\e(s) &= (\e+1-s)\,e^{\sqrt{2}W(s)-2s}\int_0^s {1\over\e+1-r}e^{-[\sqrt{2}W(r)-2r]}[3+2\ti\Th_\e(r)-\Th_\e(r)]\dbE[X_\e(r)]dr \\
    &\hp{=\ } -\sqrt{2}(\e+1-s)\,e^{\sqrt{2}W(s)-2s}\int_0^s {1\over\e+1-r}e^{-[\sqrt{2}W(r)-2r]}\dbE[X_\e(r)]dW(r) \\
&\hp{=\ } +{\e+1-s\over\e+1}\,e^{\sqrt{2}W(s)-2s}\xi, \q s\in[0,1].
\end{align*}
In light of \autoref{thm:SLQ-oloop-kehua}, to prove the open-loop solvability at $(0,\xi)$, it suffices to show the family $\{u_\e(\cd)\}_{\e>0}$ defined by
\begin{eqnarray}\label{u_e}
u_\e(s) &\3n\deq\3n& \Th_\e(s)X_\e(s)+\bar\Th_\e(s)\dbE[X_\e(s)]= \Th_\e(s)X_\e(s)+[\ti\Th_\e(s)-\Th_\e(s)]\dbE[X_\e(s)]\nn\\
        &\3n=\3n&  -e^{\sqrt{2}W(s)-2s}\int_0^s {1\over\e+1-r}e^{-[\sqrt{2}W(r)-2r]}[3+2\ti\Th_\e(r)-\Th_\e(r)]\dbE[X_\e(r)]dr\nn \\
    &\hp{=\ }& +\sqrt{2}e^{\sqrt{2}W(s)-2s}\int_0^s {1\over\e+1-r}e^{-[\sqrt{2}W(r)-2r]}\dbE[X_\e(r)]dW(r)\nn \\
&\hp{=\ }& -{1\over1+\e}e^{\sqrt{2}W(s)-2s}\xi+\Big[{1\over \e+1-s}-{4\over\e+8-8s}\Big]\dbE[X_\e(s)], \q s\in[0,1]
\end{eqnarray}
is bounded in $L^2_\dbF(0,1;\dbR)$. Note that $\xi$ is $\cF_0$-measurable, it is clear to see that
\bel{1}
\dbE\int_0^1\Big|{1\over1+\e}e^{\sqrt{2}W(s)-2s}\xi\Big|^2ds\les\dbE\int_0^1e^{2\sqrt{2}W(s)-4s}|\xi|^2ds=|\xi|^2.
\ee
Next, by \rf{ex-E-X}, we have
\begin{align}
&\dbE\int_0^1\Big|\big[{1\over \e+1-s}-{4\over\e+8-8s}\big]\dbE[X_\e(s)]\Big|^2ds\nn\\
&\q\les\int_0^1\Big|{5\over \e+1-s}\times{\e+8-8s\over\e+8}\dbE[\xi]\Big|^2ds\nn\\
\label{2}&\q\les 25|\dbE[\xi]|^2 \les 25\dbE[|\xi|^2],
\end{align}
and
\begin{align}
&\dbE\int_0^1\Big|e^{\sqrt{2}W(s)-2s}\int_0^s {1\over\e+1-r}e^{-[\sqrt{2}W(r)-2r]}\dbE[X_\e(r)]dW(r)\Big|^2ds\nn\\
&\q\les\Big(\dbE\int_0^1\Big|e^{\sqrt{2}W(s)-2s}\Big|^4ds\Big)^{{1\over2}}\Big(\dbE\int_0^1\Big|\int_0^s {1\over\e+1-r}e^{-[\sqrt{2}W(r)-2r]}\dbE[X_\e(r)]dW(r)\Big|^4ds\Big)^{{1\over2}}\nn\\
\nn&\q\les K\Big(\dbE\int_0^1\int_0^s \Big|{1\over\e+1-r}e^{-[\sqrt{2}W(r)-2r]}\dbE[X_\e(r)]\Big|^4drds\Big)^{{1\over2}}\\
\nn&\q\les K\Big(\dbE\int_0^1\int_0^s \Big|{1\over\e+1-r}e^{-[\sqrt{2}W(r)-2r]}{\e+8-8s\over\e+8}\dbE[\xi]\Big|^4drds\Big)^{{1\over2}}\nn\\
&\q\les K\Big(\dbE\int_0^1\int_0^s e^{-[4\sqrt{2}W(r)-8r]}drds\Big)^{{1\over2}}|\dbE[\xi]|^2\les K\dbE|\xi|^2\label{3},
\end{align}
where $K$ is a generic constant which could be different from line to line.
Similar to \rf{3}, we have
\bel{4}
\dbE\int_0^1\Big|e^{\sqrt{2}W(s)-2s}\int_0^s {1\over\e+1-r}e^{-[\sqrt{2}W(r)-2r]}\dbE[X_\e(r)]dr\Big|^2ds\les K\dbE|\xi|^2.
\ee
Further, we have
\begin{align}
&\dbE\int_0^1\Big|e^{\sqrt{2}W(s)-2s}\int_0^s {1\over\e+1-r}e^{-[\sqrt{2}W(r)-2r]}[2\ti\Th_\e(r)-\Th_\e(r)]\dbE[X_\e(r)]dr\Big|^2ds\nn\\
&\q\les\Big(\dbE\int_0^1\Big|e^{\sqrt{2}W(s)-2s}\Big|^4ds\Big)^{{1\over2}}\Big(\dbE\int_0^1\Big|\int_0^s {1\over\e+1-r}e^{-[\sqrt{2}W(r)-2r]}[2\ti\Th_\e(r)-\Th_\e(r)]\dbE[X_\e(r)]dr\Big|^4ds\Big)^{{1\over2}}\nn\\
\nn&\q\les K\Big(\int_0^1\Big(\int_0^s \Big|{1\over\e+1-r}\dbE[\xi]\Big|dr\Big)^4ds\Big)^{{1\over2}}\Big(\dbE\Big[\sup_{r\in[0,1]}e^{-[4\sqrt{2}W(r)-8r]}\Big]\Big)^{{1\over2}}\\
\nn&\q\les K\Big(\int_0^1[|\ln(\e+1-s)|^4+|\ln(1+\e)|^4]ds\Big)^{{1\over 2}}|E[\xi]|^2\\
\label{5}&\q\les K\Big(\int_0^1[|\ln(1-s)|^4+1]ds\Big)^{{1\over2}}|E[\xi]|^2\les K\dbE|\xi|^2.
\end{align}
Combining \rf{u_e} with the above estimates \rf{1}--\rf{2}--\rf{3}--\rf{4}--\rf{5}, we have
$$
\dbE\int_0^1|u_\e(s)|^2ds\les K\dbE|\xi|^2.
$$
Therefore, $\{u_\e(\cd)\}_{\e>0}$ is bounded in $L^2_\dbF(0,1;\dbR)$. Let $\e\to0$ in \rf{u_e}, we get an open-loop optimal
control:
\begin{align*}
u^*(s) &= -3e^{\sqrt{2}W(s)-2s}\int_0^s e^{-[\sqrt{2}W(r)-2r]}dr\dbE[\xi]+\sqrt{2}e^{\sqrt{2}W(s)-2s}\int_0^s e^{-[\sqrt{2}W(r)-2r]}dW(r)\dbE[\xi]\nn \\
&\hp{=\ } -e^{\sqrt{2}W(s)-2s}\xi+{1\over2}\dbE[\xi], \q s\in[0,1].
\end{align*}

Finally, we let $\e\to0$ in \rf{Th_e}  to get a weak closed-loop optimal strategy $(\Th^*(\cd),\bar\Th^*(\cd))$:
\begin{align*}
  \Th^*(s) &=\lim_{\e\to0}\Th_\e(s) = -{1\over 1-s},  && s\in(0,1),\\
   \bar\Th^*(s) &=\lim_{\e\to0}  \bar\Th_\e(s)=\lim_{\e\to0} [ \ti\Th_\e(s)-\Th_\e(s)] ={1\over 2-2s} , && s\in(0,1).
\end{align*}
We point out that neither $\Th^*(\cd)$ nor $\bar \Th^*(\cd)$ is square-integrable on $(0,1)$. Indeed,
\begin{align*}
   \int_0^1 |\Th^*(s)|^2ds   &= \int_0^1 {1\over(1-s)^2}ds=\i, \\
   \int_0^1 |\bar\Th^*(s)|^2ds   &= \int_0^1 {1\over(2-2s)^2}ds=\i.
\end{align*}
\end{example}

\end{document}